\documentclass[11pt]{article}

\usepackage{amssymb}
\usepackage{amsmath,amsfonts,mathtools}
\usepackage{amsthm}
\usepackage{latexsym}
\usepackage{mathrsfs}
\usepackage{xcolor}
\usepackage{float}
\usepackage{enumitem}
\usepackage{algorithm,algorithmic}
 \allowdisplaybreaks[3]
 
 \usepackage{bbm}
\usepackage{microtype}
\usepackage[a4paper]{geometry}
\usepackage{pifont}
\usepackage{tikz}
\usepackage{latexsym}

\usepackage[a4paper]{geometry}
\usepackage{xcolor}
\usepackage{graphicx}
\usepackage{tikz}
\usetikzlibrary{arrows.meta,fit,backgrounds,positioning}

\newcommand{\beq}{\begin{equation}}
\newcommand{\eeq}{\end{equation}}
\newcommand{\beqa}{\begin{eqnarray}}
\newcommand{\eeqa}{\end{eqnarray}}
\newcommand{\beqas}{\begin{eqnarray*}}
\newcommand{\eeqas}{\end{eqnarray*}}
\newcommand{\ba}{\begin{array}}
\newcommand{\ea}{\end{array}}
\newcommand{\bi}{\begin{itemize}}
\newcommand{\ei}{\end{itemize}}

\newcommand{\mcA}{{\mathcal A}}
\newcommand{\mcE}{{\mathcal E}}
\newcommand{\mcN}{{\mathcal N}}

\newcommand{\argmin}{\arg\min}
\newcommand{\argmax}{\arg\max}

\newtheorem{lemma}{Lemma}
\newtheorem{thm}{Theorem}

\newtheorem{defi}{Definition}
\newtheorem{prop}{Proposition}

\newtheorem{rem}{Remark}

\newcounter{spb}
\def\tr0{{\tilde r_0}}

\def\rr{{\mathbb{R}}}

\def\bfx{{\bf x}}

\def\bfp{{\bf p}}
\def\bfv{{\bf v}}
\def\bfc{{\bf c}}
\def\bfe{{\bf e}}
\def\bfb{{\bf b}}
\def\bfr{{\bf r}}
\def\bfzero{{\bf 0}}

\def\mcS{{\mathcal S}}
\def\Cyc{\mathrm{Cyc}}
\def\Path{\mathrm{Path}}
\def\mcG{\mathcal{G}}
\def\bfP{{\bf P}}
\def\mcC{\mathcal{C}}

\title{The Simple Strategy-Iteration Method is Strongly Polynomial for the Turn-Based Deterministic Forward Game}
\author{
Sanyou Mei
\thanks{Department of Industrial Engineering and Decision Analytics, Hong Kong University of Science and Technology, Hong Kong, China (email: {\tt symei@ust.hk, yyye@ust.hk}).}
\and
Chunlin Sun
\thanks{Institute for Computational and Mathematical Engineering, Stanford University, California, USA. (email: {\tt chunlin@stanford.edu}).}
\and
Yinyu Ye \footnotemark[1]
\thanks{Professor Emeritus of Department of Management Science and Engineering, Stanford University, California, USA.}
\thanks{Visiting Chair Professor of Department of Data and Business Intelligence, Shanghai Jiao Tong University, Shanghai, China; Visiting Scholar of Shanghai Institute for Mathematics and Interdisciplinary Sciences, Shanghai, China.}
}

\begin{document}

\maketitle

\begin{abstract}

We study Turn-Based Deterministic Forward Games (TBDFGs), the subclass of turn-based deterministic zero-sum games in which no directed cycle contains actions controlled by both players. This forward condition is strictly weaker than acyclicity: recurrent behavior may be arbitrarily rich within one player's states, while mixed-player feedback cycles are excluded. Our main contribution separates two algorithmic consequences of this structure. First, we analyze the simple strategy-iteration method of \cite{hansen2013strategy,jia2020towards}, a generic method for TBSGs whose execution neither tests for nor uses the TBDFG property. We prove that this structure-oblivious algorithm nevertheless has a strongly polynomial guarantee on every TBDFG. In particular, it terminates after at most $O(n^6m^4\log^4 n)$ simplex pivot steps. Thus, the forward property acts as a structural certificate for convergence even when the algorithm is not informed that the input has this property. Second, when the TBDFG structure is known in advance, a backward SCC propagation algorithm is proposed that solves a sequence of deterministic-MDP subproblems and improves the bound to $O(n^3m^2\log^2 n)$ simplex pivot steps. Together, these results show that forward structure both regularizes the convergence of a general strategy-iteration method and supports a sharper structure-aware algorithm.
\end{abstract}
\noindent {\bf Keywords:} Simplex Method, Strategy-Iteration Method, Turn-Based Deterministic Game, Markov Decision Problem,  Linear Programming, Dynamic Programming, Strongly Polynomial Time 

\medskip

\noindent {\bf Mathematics Subject Classification:}  90C40, 90C05, 68Q25, 90C39

\section{Introduction}

A Turn-Based Deterministic (Zero-Sum) Game (TBDG) is a special case of a Turn-Based Stochastic (Zero-Sum) Game (TBSG) in which the transition probability distribution associated with every action is deterministic (see \cite{shapley1953stochastic}). A TBSG extends a Markov decision problem (MDP) by partitioning the states between two players: one player minimizes a common linear objective, while the other maximizes it. Although finite MDPs are polynomial-time solvable through linear programming and its deterministic version is strongly polynomial time solvable \cite{post2015simplex}, the existence of strongly polynomial algorithms for broad classes of TBSGs and TBDGs remains open.

As in an MDP, at each time step the Markov process is in a state $s=1,\dots,n$, and the decision maker may choose an action $a$ belonging to state $s$, denoted by $a\in\mcA_s$. Such a selection of one action per state is called a strategy $\pi$. After an action is chosen, the process moves to the next state and incurs an immediate cost $c_a$. The transition probabilities are determined by the chosen action $a$ through a probability distribution vector $\bfp_a\geq\bfzero\in\rr^n$. If $\bfp_a$ is deterministic, or equivalently a point-mass distribution, for every $a$, then the TBSG reduces to a TBDG. In this case, the process moves along a directed graph whose nodes are states and whose directed edges are chosen actions. In the strategy graph, each action is either on a path, called a path action, or on a cycle, called a cycle action.

In a TBSG, the total set of $n$ states and their actions are partitioned into two sets, $\mcS^1$ and $\mcS^2$. The player controlling $\mcS^1$ minimizes the common total discounted cost over the infinite horizon, while the player controlling $\mcS^2$ maximizes it. We refer to these players as Player 1 and Player 2, respectively, and let $|\mcS^1|=n_1$ and $|\mcS^2|=n_2$ with $n_1+n_2=n$. Furthermore, let $k_s$ be the number of actions available in state $s$, and suppose the actions are sorted as
\[
\mcA_s=\left\{a\in\mathbb{N}:\sum_{j=0}^{s-1}k_j< a\leq \sum_{j=0}^sk_j\right\}\quad\forall s=1,\dots,n.
\]
Here $\mathbb{N}$ is the set of natural numbers and $k_0=0$. We denote $\mcA^i=\cup_{s\in \mcS^i}\mcA_s$ for $i=1,2$. Then the total number of actions is $m=\sum_{s=1}^nk_i$, where Player 1 controls $m_1=\sum_{s=1}^{n_1}k_s$ actions $\mcA^1$ and Player 2 controls $m_2=m-m_1$ actions in $\mcA^2$.

We now give the formal definition of TBDGs.
\begin{defi}[{\bf TBDG}]
A \emph{2-Player Deterministic Zero-Sum Game} is \emph{defined by a tuple}  $\mcG=(\mcS,\mcA,\bfc,\bfP,\gamma)$. The elements of the tuple are defined as follows.
\begin{enumerate}
    \item $\mcS = \mcS^1 \cup \mcS^2 = [n]$ denotes the state space, where $\mcS^1$ and $\mcS^2$ are sets of states controlled by Players 1 and 2, respectively, with $\mcS^1 \cap\mcS^2 = \phi$.
    \item $\mcA = \cup_{s \in\mcS} \mcA_s = [m]$ denotes the action space, where $\mcA_s$ is the set of actions playable at state $s$. We assume $\mcA_s \cap \mcA_{s'} = \phi$ for all $s,s' \in\mcS$ and $s\neq s'$, and denote $\mcA^i = \cup_{s \in\mcS^i} \mcA_s$ as the set of actions available to Player $i$, for $i=1,2$.
    \item $\bfc$ is the cost vector, where $c_a$ denotes the immediate cost when action $a \in \mcA$ is taken.
    \item $P$ is the probability transition matrix. After action $a \in \mcA_s$ is taken at state $s \in \mcS$, the next state is chosen according to $\bfp_a$, where $\bfp_a$ is point-mass distribution for all $a \in \mcA$. 
    \item $\gamma\in(0,1)$ is a fixed discount factor. If an infinite sequence of actions $\{a_k\}_{k=0}^{\infty}$ is taken, the total discounted cost of this action sequence is $\sum_{k=0}^{\infty} \gamma^k c_{a_k}.$ 
\end{enumerate}
\end{defi}

We next define strategies and optimal strategies.

\begin{defi}[{\bf Strategy \& Strategy profile}]
A strategy for Player $i$ is a vector $\pi^i$ such that Player $i$ chooses action $\pi^i_s$ at state $s\in\mcS^i$, for $i=1,2$. A strategy profile $\pi=(\pi^1,\pi^2)$ is a pair of strategies, one for each player.
\end{defi}
Each strategy profile can be viewed as a subgraph of the underlying directed graph of the corresponding TBDG. Specifically, each state $s\in\mcS$ is a vertex in the subgraph, and the corresponding action $\pi_s$ is an out-going edge. The out-degree of each vertex is $1$, and each vertex and edge lies either on a cycle or on a path in this subgraph. For clarity, we specify the following definitions.

\begin{defi}[{\bf Directed Cycle}]
A directed cycle in a directed graph is a non-empty directed trail in which only the first and last vertices are equal.
\end{defi}

\begin{defi}
For any strategy profile $\pi=(\sigma,\tau)$, we denote by $\Path(\pi)$ the set of paths and by $\Cyc(\pi)$ the set of cycles in the corresponding subgraph. With a slight abuse of notation, we also regard the states and actions appearing on these paths and cycles as elements of $\Path(\pi)$ and $\Cyc(\pi)$, respectively. 
\end{defi}

By definition, a strategy profile $\pi=(\pi^1,\pi^2)$ of a TBSG specifies one action for Player 1 in each state of $\mcS^1$, denoted by $\pi^1=\{\pi_1,\dots,\pi_{n_1}\}$, and one action for Player 2 in each state of $\mcS^2$, denoted by $\pi^2=\{\pi_{n_1+1},\dots,\pi_n\}$. An optimal strategy profile is one in which no player can improve by switching actions at any state when optimizing the expected discounted sum over the infinite horizon with discount factor $\gamma$. Here, $\gamma>0$ is assumed to be strictly less than one but may be arbitrarily close to one. It is known that an optimal strategy profile exists for TBSGs.

Let $v_s$ denote the cost-to-go value of state $s$ under a strategy profile $\pi$ of a TBSG. Then $\pi$ is an optimal strategy profile if and only if $\bfv^*=(v^*_1,\dots,v^*_n)$ is a fixed point of the following operator \cite{hansen2013strategy,shapley1953stochastic}:
\begin{equation*}
\begin{aligned}
&v^*_s:=\min_{a\in\mcA_s}\left\{c_a+\gamma\bfp^T_a\bfv^*\right\}\quad\forall s\in \mcS^1,\\
&v^*_s:=\max_{a\in\mcA_s}\left\{c_a+\gamma\bfp^T_a\bfv^*\right\}\quad\forall s\in \mcS^2,
\end{aligned}
\end{equation*}
and the corresponding optimal strategy profile is
\begin{align*}
&\pi^*_s:=\argmin_{a\in\mcA_s}\left\{c_a+\gamma\bfp^T_a\bfv^*\right\}\quad\forall s\in \mcS^1,\\
&\pi^*_s:=\argmax_{a\in\mcA_s}\left\{c_a+\gamma\bfp^T_a\bfv^*\right\}\quad\forall s\in \mcS^2.
\end{align*}
The following lemma and definitions are known; see, for example, \cite{hansen2013strategy,shapley1953stochastic}.
\begin{lemma}[{\bf Value vectors}] \label{lemma1}
Let $\pi$ be any strategy profile, let $P^\pi$ be the $n$ by $n$ matrix where $P^\pi_{s,s'}$ is the probability of transitioning from $s'$ to $s$ using the actions in $\pi$, and let $\bfc^\pi$ be the vector of costs for each state under $\pi$. Then the state-indexed cost-to-go value vector $\bfv^\pi$ satisfies
\begin{equation*}
\label{def-v}
\bfv^\pi=\bfc^\pi+\gamma(P^\pi)^T\bfv^\pi\quad\mbox{or}\quad \bfv^\pi=(I-\gamma P^\pi)^{-T}\bfc^\pi.
\end{equation*}
\end{lemma}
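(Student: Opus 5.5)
The plan is to fix a strategy profile $\pi$ and study the stationary Markov chain it induces. Under $\pi$, every state $s$ uses the single action $\pi_s\in\mcA_s$. That action has immediate cost $c_{\pi_s}=:c^\pi_s$ and transition distribution $\bfp_{\pi_s}$. With the paper's convention, column $s$ of $P^\pi$ is $\bfp_{\pi_s}$. Hence $(P^\pi)^T$ has row $s$ equal to $\bfp_{\pi_s}^T$, and $((P^\pi)^T)_{s,s'}$ is the probability of moving from $s$ to $s'$. I will first derive the fixed-point equation by a one-step (first-transition) decomposition of the discounted cost. I will then show that $I-\gamma P^\pi$ is invertible and solve the equation.

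For the fixed-point equation, write $v^\pi_s=\mathbb{E}\left[\sum_{k\ge0}\gamma^k c_{a_k}\mid s_0=s\right]$, where $a_k=\pi_{s_k}$. Separate the $k=0$ term, which is deterministically $c^\pi_s$. Re-index the remaining terms as $\gamma\sum_{k\ge0}\gamma^k c_{a_{k+1}}$. By the Markov property and stationarity of $\pi$, conditioning on $s_1=s'$ turns the conditional expectation of this tail into $\gamma v^\pi_{s'}$. Averaging over $s'\sim\bfp_{\pi_s}$ gives
\[
v^\pi_s=c^\pi_s+\gamma\,\bfp_{\pi_s}^T\bfv^\pi ,
\]
which in matrix form is $\bfv^\pi=\bfc^\pi+\gamma(P^\pi)^T\bfv^\pi$. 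Exchanging expectation and infinite sum is justified by absolute convergence. The costs are finite, so $|c_{a_k}|\le\max_a|c_a|$, and $\sum_k\gamma^k<\infty$; this also shows $\bfv^\pi$ is finite. In the deterministic (TBDG) case the expectation is trivial: the trajectory is a single path that eventually enters a cycle of the strategy graph, and the identity is just $v_s=c_{\pi_s}+\gamma v_{\text{next}(s)}$.

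For invertibility, note that $(P^\pi)^T$ is row-stochastic, so $\|(P^\pi)^T\|_\infty=1$ and $\|\gamma(P^\pi)^T\|_\infty=\gamma<1$. The Neumann series $\sum_{k\ge0}\gamma^k((P^\pi)^T)^k$ therefore converges and equals $(I-\gamma(P^\pi)^T)^{-1}=(I-\gamma P^\pi)^{-T}$. Rearranging the fixed-point equation to $(I-\gamma(P^\pi)^T)\bfv^\pi=\bfc^\pi$ then gives $\bfv^\pi=(I-\gamma P^\pi)^{-T}\bfc^\pi$. As a consistency check, expanding the series gives $\sum_k\gamma^k((P^\pi)^T)^k\bfc^\pi$, whose $s$-th entry is exactly the expected discounted cost from $s$, since $((P^\pi)^T)^k_{s,\cdot}$ is the distribution of $s_k$.

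The main obstacle is not mathematical depth but bookkeeping. The paper's transposed convention, where $P^\pi_{s,s'}$ is the probability of going from $s'$ to $s$, must be handled consistently so that the transpose appears where stated. The only analytic point is the justification of the Markov-property step and the series interchange, both of which are routine given $\gamma<1$ and bounded costs.
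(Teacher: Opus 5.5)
Your proof is correct: the first-step decomposition gives $v^\pi_s=c^\pi_s+\gamma\,\bfp_{\pi_s}^T\bfv^\pi$, you handle the transposed convention for $P^\pi$ consistently, and the bound $\|\gamma(P^\pi)^T\|_\infty=\gamma<1$ makes the Neumann series converge, which justifies writing $(I-\gamma P^\pi)^{-T}$. The paper does not prove this lemma itself; it cites it as known from Hansen--Miltersen--Zwick and Shapley, where it rests on this same standard Bellman-equation and Neumann-series argument.
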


\begin{defi}[{\bf Optimal counter strategy}]
Let $\tau$ be a strategy of Player 2. A strategy $\sigma$ for Player 1 is an optimal counter strategy against $\tau$ if and only if $\bfv^{(\sigma,\tau)}\leq\bfv^{(\sigma',\tau)}$ for every possible strategy $\sigma'$ of Player 1. Similarly, a strategy $\tau$ for Player 2 is an optimal counter strategy against $\sigma$ if and only if $\bfv^{(\sigma,\tau')}\leq\bfv^{(\sigma,\tau)}$ for every possible strategy $\tau'$ of Player 2. For any given strategy $\sigma$ of Player 1, let $\pi^2(\sigma)$ denote the optimal counter strategy of Player 2. Similarly, for any given strategy $\tau$ of Player 2, let $\pi^1(\tau)$ denote the optimal counter strategy of Player 1.
\end{defi}

\begin{defi}[{\bf Optimal strategy profile}]
A strategy profile $\pi=(\sigma,\tau)$ is optimal if and only if $\sigma$ is an optimal counter strategy against $\tau$ and $\tau$ is an optimal counter strategy against $\sigma$. In this case, $\sigma$ is an optimal strategy for Player 1 and $\tau$ is an optimal strategy for Player 2.
\end{defi}

In addition to the value vector, a strategy profile induces an associated flux vector $\bfx^\pi$, which plays a critical role in our analysis. The flux can be interpreted as a ``discounted flow.'' Suppose that we start with one unit of mass at every state and then run the Markov chain. At each time step, we remove a fraction $1-\gamma$ of the mass from each state and redistribute the remaining mass according to the strategy $\pi$. Summing over all time steps, the total amount of mass passing through each action is its flux. More formally, we have the following lemma (see \cite{post2015simplex}).

\begin{lemma}\label{x-old}
Let $\pi$ be a strategy profile and let $P^\pi$ be the $n$ by $n$ matrix where $P^\pi_{s',s}$ is the probability of transitioning from $s$ to $s'$ using the actions in $\pi$. The action-indexed flux vector $\bfx^\pi$ has the following properties: if action $a$ is not taken in the strategy profile, then $x_a = 0$; if $\pi$ takes $a$ in state $s$, then the nonzero state-indexed flux vector $\bfx^\pi_+$ satisfies 
\[
\bfx^\pi_+=(I-\gamma P^\pi)^{-1}\bfe,
\]
where $\bfe$ is the vector of all ones in dimension $n$. The flux represents the total discounted number of times each action is used when the TBSG starts from all states and evolves according to the Markov chain $P^\pi$ with discounting at each iteration.
\end{lemma}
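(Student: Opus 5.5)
Lemma~\ref{x-old} asserts that the flux vanishes off $\pi$, and that on the actions of $\pi$ (indexed by the state that uses them) it equals $(I-\gamma P^\pi)^{-1}\bfe$. Since this is a standard fact, I would derive it directly from the definition of flux as total discounted mass. Place one unit of mass on every state at time $0$. The mass distribution at time $t$, indexed by state, is then $\gamma^t (P^\pi)^t\bfe$. Here the convention $P^\pi_{s',s}$ = probability of moving from $s$ to $s'$ makes column $s$ of $P^\pi$ equal to $\bfp_{\pi_s}$. Consequently, multiplying a mass vector by $P^\pi$ redistributes it forward one step, and the factor $\gamma$ removes the fraction $1-\gamma$.

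The mass sitting at state $s$ at time $t$ leaves $s$ through action $\pi_s$ and through no other action of $\mcA_s$. Hence the flux through $\pi_s$ is the $s$-th entry of $\sum_{t\ge0}\gamma^t (P^\pi)^t\bfe$. Any action $a\notin\pi$ carries no mass at any time, so $x_a=0$, which settles the first claim.

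For the second claim it remains to sum the Neumann series. $P^\pi$ is column-stochastic, so $\|P^\pi\|_1=1$ and $\|\gamma P^\pi\|_1=\gamma<1$. The series $\sum_{t\ge0}(\gamma P^\pi)^t$ therefore converges absolutely, $I-\gamma P^\pi$ is invertible, and the sum equals $(I-\gamma P^\pi)^{-1}$. This gives $\bfx^\pi_+=(I-\gamma P^\pi)^{-1}\bfe$. Equivalently, $\bfx^\pi_+$ is the unique solution of the balance equation $\bfx_+=\bfe+\gamma P^\pi\bfx_+$, which says that inflow to a state equals one initial unit plus the discounted flux arriving from its predecessors. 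The interpretation as the discounted number of uses of each action is just the entrywise reading of $\sum_t\gamma^t(P^\pi)^t\bfe$.

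The only point needing care is keeping the orientation of $P^\pi$ straight. Lemma~\ref{lemma1} uses the transpose convention ($P^\pi_{s,s'}$ is the probability of moving from $s'$ to $s$), so there the value satisfies $\bfv=\bfc+\gamma(P^\pi)^T\bfv$. Here the flux propagates forward via $P^\pi$ without transpose. As a consistency check I would verify the duality $\bfe^T\bfv^\pi=(\bfc^\pi)^T\bfx^\pi_+$, which follows from the two inverse formulas. With the orientation fixed, no step is a genuine obstacle.
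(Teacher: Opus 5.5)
Your argument is correct and matches how the paper handles this lemma: the paper gives no formal proof (it cites Post and Ye) but motivates the result with exactly the discounted-mass picture you formalize, and your Neumann-series computation for the column-stochastic $P^\pi$ makes it rigorous, as does the equivalent balance equation $(I-\gamma P^\pi)\bfx^\pi_+=\bfe$, which is the basic solution of $A\bfx=\bfe$ on the columns of $\pi$. One small wording slip: Lemma~\ref{lemma1} uses the same matrix convention as here (entry $(i,j)$ is the probability of moving from $j$ to $i$), not a transposed one, and only its value equation involves $(P^\pi)^T$, which is precisely why your duality check $\bfe^T\bfv^\pi=(\bfc^\pi)^T\bfx^\pi_+$ goes through.
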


A flux vector $\bfx^\pi$ can be viewed as a basic feasible solution to the following minimax ``linear programming (LP)'' problem:
\begin{equation}\label{mmax}
\begin{aligned}
\min\max&\quad \bfc^T\bfx\\
\mbox{subject to}&\quad A\bfx=\bfe,\\
&\quad \bfx\geq\bfzero,
\end{aligned}
\quad\mbox{or}\quad
\begin{aligned}
\min\max&\quad (\bfc^1)^T\bfx^1+(\bfc^2)^T\bfx^2\\
\mbox{subject to}&\quad A^1\bfx^1+A^2\bfx^2=\bfe,\\
&\quad \bfx\geq\bfzero,
\end{aligned}
\end{equation}
where $\bfc\in\rr^m$ and $A\in\rr^{n\times m}$. The $a$-th entry of $\bfc$ is $c_a$, and the $a$-th column of $A$ is $\bfe_s-\gamma \bfp_a$ where $a\in\mcA_s$, that is, action $a$ belongs to state $s$. Here, $\bfe_s$ is the vector whose $s$-th entry is one and whose other entries are zero. The flux vector $\bfx\in\rr^m$ is partitioned as $\bfx=(\bfx^1;\bfx^2)$, where $\bfx^1=(x_1;\dots;x_{m_1})$ and $\bfx^2$ contains the remaining components; similarly, $\bfc=(\bfc^1;\bfc^2)$ and $A=[A^1, A^2]$. The minimax problem chooses $\bfx^1$ to minimize and $\bfx^2$ to maximize the linear objective $\bfc^T\bfx=(\bfc^1)^T\bfx^1+(\bfx^2)^T\bfx^2$ subject to the joint constraints $A^1\bfx^1+A^2\bfx^2=\bfe$ and $\bfx\geq\bfzero$.

Under the above ``LP-like'' representation, a flux vector $\bfx$ is optimal, or equivalently corresponds to an optimal strategy profile, if and only if, for a value vector $\bfv$,
\begin{equation}\label{opt0}
(\bfr^1)^T\bfx^1=(\bfr^2)^T\bfx^2=0,\quad\bfr=\bfc-A^T\bfv,\quad\bfr^1\geq\bfzero,\quad\mbox{and}\quad\bfr^2\leq\bfzero,
\end{equation}
where $\bfr$ is the reduced-cost vector associated with the value vector $\bfv$, $\bfr^1=\bfc^1-(A^1)^T\bfv$, and $\bfr^2=\bfc^2-(A^2)^T\bfv$. 

\subsection{Literature and background}
Stochastic games were introduced by Shapley as a dynamic extension of matrix games in which the state evolves according to actions chosen by the players \cite{shapley1953stochastic}. The model has since developed along several related lines, including recursive games and undiscounted stochastic games \cite{gillette1957stochastic}, as well as the broader theory of competitive Markov decision processes \cite{filar1997competitive}. Markov decision processes form the one-player specialization of this framework and provide the classical algorithmic baseline through dynamic programming, policy iteration, and linear programming \cite{howard1960dynamic,puterman1994markov}.

The computational complexity of MDPs has been studied from several complementary viewpoints. Papadimitriou and Tsitsiklis analyzed the complexity of finite-horizon, discounted, and average-cost MDPs \cite{papadimitriou1987complexity}, while Littman, Dean, and Kaelbling surveyed complexity issues surrounding MDP solution methods in planning and artificial intelligence \cite{littman1995complexity}. For discounted MDPs, the complexity landscape is governed by two closely related distinctions: polynomial versus strongly polynomial complexity, and a fixed discount factor versus one supplied as part of the input. Although finite discounted MDPs are polynomial-time solvable through linear programming, strong polynomiality requires a bound on the number of arithmetic operations that is independent of the numerical encoding, including how close $\gamma$ is to one. Ye obtained a strongly polynomial algorithm for discounted MDPs with a fixed discount factor using an interior-point method \cite{ye2005new}, and subsequently established strongly polynomial iteration bounds for the simplex and policy-iteration methods in the same setting \cite{ye2011simplex}. The iteration complexity of policy-improvement methods is nevertheless more subtle: Mansour and Singh provided early worst-case upper bounds for policy iteration \cite{mansour1999complexity}, whereas Melekopoglou and Condon, Fearnley, and subsequent work established exponential lower bounds for several policy-improvement variants \cite{melekopoglou1994complexity,fearnley2010exponential}. Hollanders, Delvenne, and Jungers further demonstrated exponential behavior for discounted MDPs when the discount factor is part of the input \cite{hollanders2012complexity}. Stronger results become possible under additional structural restrictions. For deterministic MDPs, Post and Ye proved that the simplex method with the most-negative-reduced-cost rule is strongly polynomial even with an arbitrary uniform discount factor, obtaining an $O(n^3m^2\log^2 n)$ iteration bound \cite{post2015simplex}. More generally, Scherrer’s structural bounds for policy iteration show that restrictions on transient and recurrent behavior can eliminate explicit dependence on the discount factor for broader classes of MDPs \cite{scherrer2013improved}. These results provide an important precedent for the present paper: recurrence need not obstruct strong polynomiality when the underlying transition structure is sufficiently controlled.

The two-player setting is considerably more delicate. Hansen, Miltersen, and Zwick proved that strategy iteration is strongly polynomial for turn-based stochastic games when the discount factor is fixed, although their iteration bound depends explicitly on $1/(1-\gamma)$ and therefore does not yield strong polynomiality when $\gamma$ is part of the input \cite{hansen2013strategy}. Jia, Wen, and Ye subsequently developed the simplex strategy-iteration framework adopted in this paper, establishing geometric convergence and corresponding complexity guarantees for general TBSGs in the fixed-discount regime \cite{jia2020towards}. Complementary exact and complexity-theoretic analyses further illuminate the difficulty of solving stochastic games on graphs \cite{andersson2009complexity,hansen2011exact}, while exponential lower bounds for strategy-improvement procedures demonstrate that local pivoting rules can perform poorly on unrestricted two-player game classes \cite{friedmann2011exponential}. Thus, when the discount factor is part of the input, strongly polynomial algorithms are not known for broad classes of TBSGs or TBDGs. Against this backdrop, we ask whether strong polynomiality can be recovered by imposing a natural restriction on the recurrent structure of the game graph.

Motivated by this complexity landscape, we study a special class of turn-based deterministic games, called \emph{Turn-Based Deterministic Forward Games} (TBDFGs). In a TBDFG, the underlying directed game graph contains no directed cycle involving actions from both players. This forward-progression condition does not require the graph to be acyclic: recurrent behavior is allowed, but only within states controlled by a single player. Consequently, TBDFGs form a natural intermediate class between acyclic deterministic games and general cyclic turn-based deterministic games. They are substantially more general than finite-horizon or tree-structured models, while still excluding mixed-player feedback loops in which both players can repeatedly influence the same directed cycle. This balance makes TBDFGs both practically relevant and mathematically tractable.

Board games provide important motivating examples as a subclass of TBDFG. The game is divided into finitely many stages,  where each stage consists of states of one of the two players and the immediate following stage consists of the states of the other player. The game is forward in the sense that no state-action transition can return to a state in an earlier stage. At the same time, the model does not require each stage to be acyclic: states controlled by the same player in the same stage may still transition among one another and form local cycles. These self-contained cycles represent local interactions within the decision structure of a single player, while the absence of backward transitions across stages rules out cyclic strategic feedback between the two players. Termination states can be incorporated as absorbing states, namely, single-edge self-loops with arbitrary terminal rewards or costs; see Figure \ref{fig:ab}.


\begin{figure}[H]
\centering

\begin{tikzpicture}[
scale=0.78,
state1/.style={circle, draw=blue!60, fill=blue!20, thick, minimum size=5mm, inner sep=0pt, font=\scriptsize},
state2/.style={circle, draw=red!60, fill=red!20, thick, minimum size=5mm, inner sep=0pt, font=\scriptsize},
legendstate1/.style={circle, draw=blue!60, fill=blue!20, thick, minimum size=3.5mm, inner sep=0pt},
legendstate2/.style={circle, draw=red!60, fill=red!20, thick, minimum size=3.5mm, inner sep=0pt},
caption/.style={font=\scriptsize},
>=stealth,
font=\small
]


\node[legendstate1] at (11.15,0.45) {};
\node[anchor=west, font=\scriptsize] at (11.45,0.45) {Player 1 state $(\mcS^1)$};

\node[legendstate2] at (11.15,-0.05) {};
\node[anchor=west, font=\scriptsize] at (11.45,-0.05) {Player 2 state $(\mcS^2)$};

\draw[->, thick] (10.95,-0.55) -- (11.35,-0.55);
\node[anchor=west, font=\scriptsize] at (11.45,-0.55) {Action};

\draw[dashed, ->, thick]
    (10.95,-1.15) arc[start angle=220,end angle=-80,radius=0.18];
\node[anchor=west, font=\scriptsize] at (11.55,-1.15) {Self-contained directed cycle};


\node[state1] (a1) at (0,0) {1};
\node[state2] (a2) at (-1.5,-1.5) {2};
\node[state2] (a3) at (1.5,-1.5) {3};

\node[state1] (a4) at (-2.5,-3) {4};
\node[state1] (a5) at (-1.5,-3) {5};
\node[state1] (a6) at (-0.5,-3) {6};

\node[state1] (a7) at (0.5,-3) {7};
\node[state1] (a8) at (1.5,-3) {8};
\node[state1] (a9) at (2.5,-3) {9};

\draw[->] (a1)--(a2);
\draw[->] (a1)--(a3);

\draw[->] (a2)--(a4);
\draw[->] (a2)--(a5);
\draw[->] (a2)--(a6);

\draw[->] (a3)--(a7);
\draw[->] (a3)--(a8);
\draw[->] (a3)--(a9);

\node[state2] (a10) at (-2.5,-4.5) {10};
\node[state2] (a11) at (-1.5,-4.5) {11};
\node[state2] (a12) at (-0.5,-4.5) {12};
\node[state2] (a13) at (0.5,-4.5) {13};
\node[state2] (a14) at (1.5,-4.5) {14};
\node[state2] (a15) at (2.5,-4.5) {15};

\draw[->] (a4)--(a10);
\draw[->] (a5)--(a11);
\draw[->] (a6)--(a12);
\draw[->] (a7)--(a13);
\draw[->] (a8)--(a14);
\draw[->] (a9)--(a15);

\draw[->] (a4)--(a11);
\draw[->] (a6)--(a11);
\draw[->] (a6)--(a13);
\draw[->] (a8)--(a13);
\draw[->] (a8)--(a15);

\node[font=\small] at (0,-5.05) {$\vdots$};
\node[caption] at (0,-6.1) {(a) Acyclic game.};


\node[state1] (b1) at (8,0) {1};
\node[state2] (b2) at (6.5,-1.5) {2};
\node[state2] (b3) at (9.5,-1.5) {3};

\node[state1] (b4) at (5.8,-3) {4};
\node[state1] (b5) at (6.8,-3) {5};

\node[state1] (b6) at (8.8,-3) {6};
\node[state1] (b7) at (10.2,-3) {7};

\draw[->] (b1)--(b2);
\draw[->] (b1)--(b3);

\draw[->] (b2)--(b4);
\draw[->] (b2)--(b5);

\draw[->] (b3)--(b6);
\draw[->] (b3)--(b7);

\draw[dashed,blue,->,thick]
    (b5) to[out=20,in=340,looseness=8] (b5);

\draw[dashed,blue,->,thick,bend left=25] (b6) to (b7);
\draw[dashed,blue,->,thick,bend left=25] (b7) to (b6);

\node[state2] (b8)  at (5.8,-4.5) {8};
\node[state2] (b9)  at (6.8,-4.5) {9};
\node[state2] (b10) at (8.8,-4.5) {10};
\node[state2] (b11) at (10.2,-4.5) {11};

\draw[->] (b4)--(b8);
\draw[->] (b5)--(b9);
\draw[->] (b6)--(b10);
\draw[->] (b7)--(b11);
\draw[->] (b4)--(b9);
\draw[->] (b5)--(b8);
\draw[->] (b6)--(b9);
\draw[->] (b6)--(b11);
\draw[->] (b7)--(b10);

\draw[dashed,red,->,thick]
    (b9) to[out=20,in=340,looseness=8] (b9);
\draw[dashed,red,->,thick,bend left=20] (b10) to (b11);
\draw[dashed,red,->,thick,bend left=20] (b11) to (b10);

\node[font=\small] at (8,-5.05) {$\vdots$};
\node[caption] at (8,-6.1) {(b) Two-player staged game.};

\path[use as bounding box] (-5.0,-6.6) rectangle (13.8,2.3);

\end{tikzpicture}

\caption{Special cases of TBDFG.}
\label{fig:ab}

\end{figure}

This staged interpretation appears naturally in progression-based games and decision processes, such as Chess and Go. In these games, stages can be viewed as successive decision steps from the initial position to termination. The game progresses forward because each move advances the play to a later decision stage, while terminal outcomes such as checkmate, resignation, draw, or scoring can be represented by absorbing termination states. In Go, for instance, a board position that would repeat an earlier position is prohibited by the ko or superko rule. Equivalently, such a prohibited repetition can be modeled as reaching a terminal invalid-move state, rather than as a transition back to an earlier stage. The simplest such instance in Go is shown in Figure \ref{fig:simple-ko-loop} where Black plays at $x$ and captures the white stone at $y$, producing position $T$. If White were allowed to immediately play at $y$, White would capture the black stone at $x$ and return the board to position $S$. The ko rule prevents this infinite two-player cycle. Under this representation, the game has no mixed-player interaction cycle: repetitions are absorbed by terminal states, and the remaining legal transitions progress forward through the staged game tree. Thus, Chess and Go illustrate the practical relevance of the TBDFG framework, whose essential structure is forward progression together with possible local self-contained cycles.

\begin{figure}[H]
\centering
\begin{tikzpicture}[scale=0.75, every node/.style={font=\small}]

\newcommand{\board}[2]{
    \begin{scope}[shift={#1}]
        \foreach \i in {0,...,4}{
            \draw[gray!60] (0,\i) -- (4,\i);
            \draw[gray!60] (\i,0) -- (\i,4);
        }

        #2
    \end{scope}
}

\newcommand{\black}[2]{
    \fill[black] (#1,#2) circle (0.16);
}
\newcommand{\white}[2]{
    \fill[white] (#1,#2) circle (0.16);
    \draw[black] (#1,#2) circle (0.16);
}
\newcommand{\markpt}[2]{
    \draw[red, thick] (#1,#2) circle (0.24);
}

\board{(0,0)}{
    \white{1}{2}
    \white{2}{1}
    \white{2}{3}
    \white{3}{2}

    \black{3}{1}
    \black{3}{3}
    \black{4}{2}

    \markpt{2}{2}
}
\node at (2,-0.7) {State $S$: Black to play};
\node[red] at (2,4.45) {$x$};

\draw[->, thick] (4.8,2) -- (6.2,2);
\node at (5.5,2.45) {$B:x$};

\board{(7,0)}{
    \white{1}{2}
    \white{2}{1}
    \white{2}{3}

    \black{2}{2}
    \black{3}{1}
    \black{3}{3}
    \black{4}{2}

    \markpt{3}{2}
}
\node at (9,-0.7) {State $T$: White to play};
\node[red] at (10,4.45) {$y$};

\draw[->, thick] (11.8,2) -- (13.2,2);
\node at (12.5,2.45) {$W:y$};

\board{(14,0)}{
    \white{1}{2}
    \white{2}{1}
    \white{2}{3}
    \white{3}{2}

    \black{3}{1}
    \black{3}{3}
    \black{4}{2}

    \markpt{2}{2}
}
\node at (16,-0.7) {Back to $S$};
\node[red] at (16,4.45) {$x$};

\draw[->, thick, dashed] (16,5) .. controls (10,6.3) and (6,6.3) .. (2,5);
\node at (9,6.35) {without ko: $S \to T \to S \to T \to \cdots$};

\end{tikzpicture}
\caption{A simplest ko instance in Go.}
\label{fig:simple-ko-loop}
\end{figure}

The success of AlphaGo further highlights the algorithmic importance of exploiting structure in large deterministic games. AlphaGo and AlphaGo Zero combine policy improvement, value estimation, and search to make the enormous game tree of Go computationally tractable \cite{silver2016mastering,silver2017mastering}. Although these methods are learning-based and heuristic rather than complexity-theoretic, they demonstrate that structural information and iterative value improvement can be powerful tools for solving large turn-based games. Our work is complementary to this line of research. Instead of designing a practical learning system for Go, we identify a structural subclass of the game that captures the forward-progression principle in Go and other board games, and prove that a simple strategy-iteration method converges in strongly polynomial time for this class.

We make the formal definition of TBDFGs as the following.

\begin{defi}[{\bf TBDFG}]\label{tbdfg}
A \emph{Turn-Based Deterministic Forward Game (TBDFG)} is a \emph{TBDG} satisfying the additional \emph{forward-progression} property that the underlying game graph contains no directed cycle involving actions from both players.
\end{defi}


The forward game setting is naturally suited to broader classes of games and decision processes with hierarchical, staged, or predominantly forward dynamics. In many such models, the state space can be organized by a progress variable, such as time, level, resource depletion, accumulated information, or position in a task hierarchy. Most actions advance the system with respect to this variable, while local loops may occur within a stage when a player repeats a decision, retries an action, waits, refines a subtask, or cycles among internal configurations before the game proceeds. When these local cycles are controlled by a single player, the resulting game satisfies the TBDFG condition. This setting captures deterministic planning problems with adversarial choices, hierarchical game trees with local repetitions, verification and synthesis games with layered progress measures, and restricted graph games in which cycles are confined to one controller.

The assumption is also motivated by graph games, where the transition graph plays a central role in both modeling and complexity analysis. Simple stochastic games~\cite{condon1992complexity,ludwig1995subexponential}, mean-payoff games~\cite{ehrenfeucht1979positional,zwick1996complexity}, and parity games~\cite{emerson1991tree,calude2017deciding} demonstrate how graph structure can strongly influence algorithmic behavior. Since cycles are often the main source of long-run strategic complexity, the forward-game condition isolates this difficulty by allowing only player-internal cycles. Consequently, TBDFGs preserve nontrivial recurrent behavior while ruling out directed feedback cycles controlled jointly by both players.

These models can have substantially richer structures than special cases such as acyclic games and two-player staged games, illustrated in Figure~\ref{fig:ab}. Figure~\ref{fig:cd} presents examples of more general TBDFGs. Figure~\ref{fig:cd}(a) contains cycles in Player~2's states and does not exhibit the alternating stage-like structure typical of board games. Figure~\ref{fig:cd}(b) shows that multiple single-player cycles may coexist and form more complicated recurrent structures. The defining feature of a TBDFG is that every directed cycle remains internal to one player, allowing rich local recurrence while preserving forward progression in the strategic interaction between the two players.

\begin{figure}[H]
\centering

\begin{tikzpicture}[
scale=0.78,
state1/.style={circle, draw=blue!60, fill=blue!20, thick, minimum size=5mm, inner sep=0pt, font=\scriptsize},
state2/.style={circle, draw=red!60, fill=red!20, thick, minimum size=5mm, inner sep=0pt, font=\scriptsize},
legendstate1/.style={circle, draw=blue!60, fill=blue!20, thick, minimum size=3.5mm, inner sep=0pt},
legendstate2/.style={circle, draw=red!60, fill=red!20, thick, minimum size=3.5mm, inner sep=0pt},
caption/.style={font=\scriptsize},
>=stealth,
font=\small
]


\node[legendstate1] at (11.15,0.45) {};
\node[anchor=west, font=\scriptsize] at (11.45,0.45) {Player 1 state $(\mcS^1)$};

\node[legendstate2] at (11.15,-0.05) {};
\node[anchor=west, font=\scriptsize] at (11.45,-0.05) {Player 2 state $(\mcS^2)$};

\draw[->, thick] (10.95,-0.55) -- (11.35,-0.55);
\node[anchor=west, font=\scriptsize] at (11.45,-0.55) {Action};

\draw[dashed, ->, thick]
    (10.95,-1.15) arc[start angle=220,end angle=-80,radius=0.18];
\node[anchor=west, font=\scriptsize] at (11.55,-1.15) {Self-contained directed cycle};


\node[state1] (c1) at (0,0) {1};
\node[state2] (c2) at (-1.5,-1.5) {2};
\node[state2] (c3) at (1.5,-1.5) {3};

\node[state1] (c4) at (-2,-3) {4};
\node[state1] (c5) at (-1,-3) {5};

\node[state2] (c6) at (0.8,-3) {6};
\node[state2] (c7) at (2.2,-3) {7};

\draw[->] (c1)--(c2);
\draw[->] (c1)--(c3);

\draw[->] (c2)--(c4);
\draw[->] (c2)--(c5);

\draw[->] (c3)--(c6);
\draw[->] (c3)--(c7);

\draw[dashed,red,->,thick,bend left=25] (c6) to (c7);
\draw[dashed,red,->,thick,bend left=25] (c7) to (c6);

\draw[dashed,red,->,thick]
    (c7) to[out=300,in=20,looseness=8] (c7);

\node[state2] (c8)  at (-2,-4.5) {8};
\node[state2] (c9)  at (-1,-4.5) {9};
\node[state2] (c10) at (0.8,-4.5) {10};
\node[state2] (c11) at (2.2,-4.5) {11};

\draw[->] (c4)--(c8);
\draw[->] (c5)--(c9);
\draw[->] (c6)--(c10);
\draw[->] (c7)--(c11);

\draw[->] (c5)--(c10);
\draw[->] (c5)--(c8);
\draw[->] (c7)--(c10);

\draw[dashed,red,->,thick,bend left=20] (c8) to (c9);
\draw[dashed,red,->,thick,bend left=20] (c9) to (c8);
\draw[dashed,red,->,thick,bend left=20] (c10) to (c11);
\draw[dashed,red,->,thick,bend left=20] (c11) to (c10);

\node[font=\small] at (0,-5.15) {$\vdots$};
\node[caption] at (0,-6.2) {(a) Cycle contained in Player 2's states.};


\node[state1] (d1) at (8,0) {1};

\node[state2] (d2) at (6,-1.5) {2};
\node[state2] (d3) at (8,-1.5) {3};
\node[state2] (d4) at (10,-1.5) {4};

\node[state1] (d5) at (5.3,-3) {5};
\node[state1] (d6) at (6.7,-3) {6};

\node[state1] (d7) at (8,-3) {7};

\node[state2] (d8) at (9.3,-3) {8};
\node[state2] (d9) at (10.7,-3) {9};

\draw[->] (d1)--(d2);
\draw[->] (d1)--(d3);
\draw[->] (d1)--(d4);

\draw[->] (d2)--(d5);
\draw[->] (d2)--(d6);

\draw[->] (d3)--(d7);

\draw[->] (d4)--(d8);
\draw[->] (d4)--(d9);

\draw[dashed,blue,->,thick,bend left=25] (d5) to (d6);
\draw[dashed,blue,->,thick,bend left=25] (d6) to (d5);

\draw[dashed,red,->,thick,bend left=25] (d8) to (d9);
\draw[dashed,red,->,thick,bend left=25] (d9) to (d8);

\draw[dashed,red,->,thick,bend left=25] (d8) to (d3);
\draw[dashed,red,->,thick,bend left=25] (d3) to (d4);
\draw[dashed,red,->,thick,bend left=25] (d4) to (d3);
\draw[dashed,red,->,thick,bend left=25] (d4) to (d8);

\draw[dashed,red,->,thick]
    (d9) to[out=300,in=20,looseness=8] (d9);

\draw[dashed,red,->,thick]
    (d4) to[out=300,in=20,looseness=8] (d4);

\node[state2] (d10) at (5.3,-4.5) {10};
\node[state2] (d11) at (6.7,-4.5) {11};
\node[state2] (d12) at (8.0,-4.5) {12};
\node[state2] (d13) at (9.3,-4.5) {13};
\node[state2] (d14) at (10.7,-4.5) {14};

\draw[->] (d5)--(d10);
\draw[->] (d6)--(d11);
\draw[->] (d7)--(d12);
\draw[->] (d8)--(d13);
\draw[->] (d9)--(d14);

\draw[->] (d5)--(d11);
\draw[->] (d7)--(d11);
\draw[->] (d7)--(d13);
\draw[->] (d9)--(d13);

\draw[dashed,red,->,thick,bend left=20] (d10) to (d11);
\draw[dashed,red,->,thick,bend left=20] (d11) to (d10);
\draw[dashed,red,->,thick] (d12) to[out=20,in=340,looseness=8] (d12);
\draw[dashed,red,->,thick,bend left=20] (d13) to (d14);
\draw[dashed,red,->,thick,bend left=20] (d14) to (d13);

\node[font=\small] at (8,-5.15) {$\vdots$};
\node[caption] at (8,-6.2) {(b) Multiple self-contained cycles in both players.};

\path[use as bounding box] (-5.0,-6.7) rectangle (13.8,2.3);

\end{tikzpicture}

\caption{Examples of general TBDFG.}
\label{fig:cd}

\end{figure}
%
%
%

This richer model structure also makes the problem challenging to solve. Acyclic games can be solved by backward dynamic programming because the value of each state depends only on later states. In contrast, TBDFGs may contain recurrent components, so a direct backward argument is insufficient. At the same time, the forward property is graph-theoretic and can be exploited directly once it is known. Thus, the main point of our first result is not merely that TBDFGs are strongly polynomially solvable. Rather, it is that a standard simple strategy-iteration method, designed for general turn-based stochastic games and run without testing for the TBDFG structure, nevertheless follows a strongly polynomial trajectory on this subclass. This provides a structure-oblivious convergence guarantee for a generic local-improvement method. Our second result is complementary: when the forward structure is known or certified, one can exploit the SCC decomposition explicitly and obtain
a sharper specialized algorithm. Our contributions are as follows.
\begin{itemize}
\item We formalize the TBDFG, a structural subclass of TBDG in which no directed cycle contains actions from both players. This class is motivated by forward-progression structures in staged games, deterministic planning, and graph games with local recurrence.
\item We analyze the simple strategy-iteration algorithm of \cite{hansen2013strategy,jia2020towards}, which is not tailored to TBDFGs but a general strategy-iteration procedure for solving TBSGs. We prove that even this general algorithm, run without detecting or exploiting the forward structure, converges in strongly polynomial time on TBDFGs, with an iteration bound of $O(n^6m^4 \log^4 n)$.This establishes strongly polynomial solvability of TBDFGs by the general strategy-iteration algorithm. 
\item We then exploit the graph structure of TBDFGs directly and develop a specialized backward strongly connected component (SCC) propagation algorithm. This second algorithm uses information that Algorithm \ref{alg} ignores for generality: the decomposition of the game graph into single-player SCCs ordered acyclically. It therefore gives a sharper structure-aware bound of $O(n^3m^2 \log^2 n)$.
\end{itemize}

The rest of this paper is organized as follows. In Subsection \ref{sec:prelim}, we introduce notation, terminology, and known properties for TBDGs. In Section \ref{sec:alg}, we study the general simple strategy-iteration algorithm and prove that it is strongly polynomial for TBDFGs. In Section~\ref{sec:specialized}, we use the graph structure of TBDFGs to design a specialized backward SCC propagation algorithm with improved complexity. Section~\ref{sec:summary} summarizes our results.

%


\subsection{Preliminaries}\label{sec:prelim}
We use the following notation throughout the paper. Let $\rr^n$ denote the Euclidean space of dimension $n$, and let $\mathbb{N}$ denote the set of all natural numbers. For any $n\in\mathbb{N}\setminus\{0\}$, let $[n]$ denote the set $\{1,\dots,n\}$. Let $\bfe$ and $\bfzero$ be the all-ones and all-zeros vectors, respectively, with dimensions clear from the context. To simplify the discussion, we use the following equivalent scaled ``LP-like'' representation.
\begin{equation}\label{prob}
\begin{aligned}
\min\max&\quad \bfc^T\bfx\\
\mbox{subject to}&\quad A\bfx=(1-\gamma)\bfe:=\bfb,\\
&\quad \bfx\geq\bfzero,
\end{aligned}
\quad\mbox{or}\quad
\begin{aligned}
\min\max&\quad (\bfc^1)^T\bfx^1+(\bfc^2)^T\bfx^2\\
\mbox{subject to}&\quad A^1\bfx^1+A^2\bfx^2=\bfb,\\
&\quad \bfx\geq\bfzero.
\end{aligned}
\end{equation}
The formulation in \eqref{prob} is equivalent to \eqref{mmax} up to the scaling factor $(1-\gamma)$. This notation simplifies the discussion below, while all previous results, namely Lemmas \ref{lemma1} and \ref{x-old} and \eqref{opt0}, continue to hold after replacing $\bfe$ with $\bfb$. The following results follow from these and  by applying the $(1-\gamma)$ scaling to the proofs in \cite{hansen2013strategy,shapley1953stochastic}.

\begin{prop}\label{prop-basic}
 For any TBDG given by $\mcG=(\mcS,\mcA,\bfc,\bfP,\gamma)$ and any strategy profile $\pi$ of the game, the following properties hold.
\begin{enumerate}[label=(\roman*)]
\item
The state-indexed cost-to-go value vector $\bfv^\pi$ satisfies
\[
\bfv^\pi=\bfc^\pi+\gamma(P^\pi)^T\bfv^\pi\quad\mbox{or}\quad \bfv^\pi=(I-\gamma P^\pi)^{-T}\bfc^\pi,
\]
where $P^\pi$ is the $n$ by $n$ matrix where $P^\pi_{s,s'}$ is the probability of transitioning from $s'$ to $s$ using the actions in $\pi$ and $\bfc^\pi$ be the vector of costs for each state under $\pi$. 
\item
The associated (scaled) flux vector $\bfx^\pi$ satisfies $\bfx^\pi_a=0$ for all $a\notin\pi$, and
\[
\bfx^\pi_+=(I-\gamma P^\pi)^{-1}\bfb
\label{def-x-new}
\]
where $\bfb=(1-\gamma)\bfe$ and $\bfx^\pi_+$ is the nonzero state-index flux vector

\item
The feasible polytope of \eqref{prob} is nondegenerate for $0<\gamma<1$, and there is a bijection between vertices and strategy profiles of the TBDG.

\item For the (scaled) flux vector, we have
\[
\sum_{a\in\pi}\bfx^\pi_a=n,
\quad
\bfx^\pi_a=0 \quad \forall a\notin\pi.
\]
Furthermore, we have $1-\gamma\leq x^\pi_a\leq(1-\gamma)n$ if $a\in\pi$ is a path-action and $1\leq x^\pi_a\leq n$ if $a\in\pi$ is a cycle-action.
\item
The reduced-cost vector is $\bfr^\pi=\bfc-A^T\bfv^\pi$ where $A=I-\gamma P$. Moreover, we have
\[
(\bfr^\pi)^T\bfx^\pi=0
\qquad\text{and}\qquad
\bfc^T\bfx^\pi=(\bfc^\pi)^T\bfx^\pi_+=\bfb^T\bfv^\pi.
\label{rx}
\]
Furthermore, for any two strategy profiles $\pi,\pi'$, we have
\[
\bfb^T(\bfv^{\pi'}-\bfv^\pi)=(\bfr^\pi)^T\bfx^{\pi'}=(\bfr^\pi_{\pi'})^T\bfx^{\pi'}_+.
\]
where $\bfr^\pi_{\pi'}$ is the truncation of $\bfr^\pi$ on the set of actions used in $\pi'$.
\item In particular, let $\pi^*=(\pi_*^1,\pi_*^2)$ be the optimal strategy profile and denote the objective value by $\nu^*$. Then we have
\begin{equation}\label{vstar}
\nu^*:=\bfc^T\bfx^{\pi_*}=(\bfc^{\pi_*})^Tx^{\pi_*}_+=\bfb^T\bfv^{\pi_*}\quad\mbox{and}\quad (\bfr^{\pi_*})^T\bfx^{\pi_*}=0.
\end{equation}
\item A flux vector $\bfx$ is optimal, or equivalently, corresponds to an optimal strategy profile, if and only if, for a value vector $\bfv$,
\begin{equation}\label{opt}
(\bfr^1)^T\bfx^1=(\bfr^2)^T\bfx^2=0,\quad\bfr=\bfc-A^T\bfv,\quad\bfr^1\geq\bfzero,\quad\mbox{and}\quad\bfr^2\leq\bfzero,
\end{equation}
where $\bfr$ is the reduced-cost vector associated with the value vector $\bfv$, $\bfr^1=\bfc^1-(A^1)^T\bfv$, and $\bfr^2=\bfc^2-(A^2)^T\bfv$.
\end{enumerate}

\end{prop}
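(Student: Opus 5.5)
We prove Proposition~\ref{prop-basic} item by item. Items (i), (ii), (vi) and (vii) are rescalings of Lemma~\ref{lemma1}, Lemma~\ref{x-old} and \eqref{opt0}. The real content is the counting in (iv), the identities in (v), and the nondegeneracy and bijection in (iii).

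\textbf{Items (i) and (ii).} Item (i) is Lemma~\ref{lemma1} verbatim. For (ii), observe that $A\bfx=\bfb$ restricted to the columns of $\pi$ reads $(I-\gamma P^\pi)\bfx_+=\bfb$. The matrix $I-\gamma P^\pi$ is invertible because $P^\pi$ is column-stochastic and $\gamma<1$. Its inverse is the Neumann series $\sum_k\gamma^k(P^\pi)^k$, which is entrywise nonnegative. Hence $\bfx^\pi_+=(1-\gamma)(I-\gamma P^\pi)^{-1}\bfe$, which is Lemma~\ref{x-old} scaled by $1-\gamma$.

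\textbf{Item (iii).} Each vertex of $\{A\bfx=\bfb,\ \bfx\ge\bfzero\}$ has a basis of $n$ columns. Every row $s$ requires some $a\in\mcA_s$ with $x_a>0$, since otherwise row $s$ would read $-\gamma\sum_a p_{a,s}x_a=1-\gamma>0$, which is impossible. So a basis picks exactly one action per state, that is, a strategy profile. Conversely, the Neumann series shows that every profile gives $\bfx_+\ge(1-\gamma)\bfe>\bfzero$. This yields nondegeneracy and the bijection.

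\textbf{Item (iv).} Summing the constraints uses $\bfe^TA=(1-\gamma)\bfe^T$, because each column $\bfe_s-\gamma\bfp_a$ sums to $1-\gamma$. Then $(1-\gamma)\sum_a x_a=n(1-\gamma)$, so $\sum_a x_a=n$. For the bounds, use the deterministic structure: the flux on $\pi_s$ equals $(1-\gamma)\sum_{t}\sum_{k\ge0}\gamma^k\mathbf{1}[t\text{ reaches } s \text{ in } k \text{ steps}]$.
\begin{itemize}
\item For a path action, state $s$ is visited at most once from each start $t$, and only $t=s$ is guaranteed. This gives $1-\gamma\le x\le(1-\gamma)n$. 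The upper bound is slightly loose: each of at most $n-1$ other states contributes $\gamma^k\le1$.
\item For a cycle action on a cycle of length $L$, each of the $L$ cycle states $t$ contributes $(1-\gamma)\gamma^{d}/(1-\gamma^L)$, where $d$ is the distance from $t$ to $s$. Summing over the cycle gives exactly $1$, which is the lower bound. The upper bound $n$ follows from $\sum_a x_a=n$.
\end{itemize}

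\textbf{Item (v).} Complementarity $(\bfr^\pi)^T\bfx^\pi=0$ holds because $\bfr^\pi_a=c_a-(\bfe_s-\gamma\bfp_a)^T\bfv^\pi=0$ for $a\in\pi$, by (i). Then $\bfc^T\bfx^\pi=(\bfv^\pi)^TA\bfx^\pi=\bfb^T\bfv^\pi$. For two profiles $\pi,\pi'$,
\[
(\bfr^\pi)^T\bfx^{\pi'}=\bfc^T\bfx^{\pi'}-(\bfv^\pi)^T\bfb=\bfb^T(\bfv^{\pi'}-\bfv^\pi),
\]
and only entries in $\pi'$ survive.

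\textbf{Items (vi) and (vii).} Item (vi) is (v) specialized to $\pi^*$. For (vii), sufficiency follows from the sign conditions via (v). If $\bfr^1\ge\bfzero$, then any deviation $\sigma'$ of Player 1 satisfies $\bfb^T(\bfv^{(\sigma',\tau)}-\bfv)\ge0$, and the same holds statewise by applying the argument to single-state starts. The symmetric statement holds for Player 2. Necessity is the Shapley fixed-point characterization quoted above.

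The only mildly delicate step is the cycle-flux lower bound in (iv). It is handled by the explicit geometric sum, which gives exactly $1$ for each cycle action.
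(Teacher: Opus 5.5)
Your proof is correct. It differs from the paper mainly in that the paper gives no derivation at all. The paper asserts that items (i)--(vii) follow from Lemmas~\ref{lemma1} and~\ref{x-old}, \eqref{opt0}, and the arguments of Hansen--Miltersen--Zwick and Shapley (and, for the flux bounds, Post--Ye) after rescaling by $(1-\gamma)$. You instead verify each item directly:
\begin{itemize}
\item the Neumann series gives invertibility and $\bfx^\pi_+\ge\bfb$;
\item the row-feasibility argument gives at least one positive action per state at every feasible point, hence nondegeneracy and the bijection;
\item the column sums $\bfe^TA=(1-\gamma)\bfe^T$ give $\sum_a x^\pi_a=n$;
\item the explicit geometric sum shows that the cycle states alone deliver exactly one unit of flux to each cycle action.
\end{itemize}
Your version is self-contained and shows exactly where determinism is used: only in the path/cycle bounds of (iv). The remaining items hold verbatim for stochastic transitions. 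The paper's version is brief, but it leaves the reader to check that rescaling carries the unscaled bounds over correctly. Those bounds are $1\le x_a\le n$ for path actions and $1/(1-\gamma)\le x_a\le n/(1-\gamma)$ for cycle actions.

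In (vii) you should add two lines you skipped.
\begin{itemize}
\item Complementarity together with $x^\pi_a>0$ on $\pi$ (from (iii)) forces $r_a=0$ for every $a\in\pi$. Hence $\bfv=\bfv^\pi$, and the Player~2 terms in $\bfr^T\bfx^{(\sigma',\tau)}$ vanish. Your deviation inequality silently uses both facts.
\item Your ``single-state starts'' step is exactly the identity $\bfv^{\pi'}-\bfv^\pi=(I-\gamma P^{\pi'})^{-T}\bfr^\pi_{\pi'}$ used in the proof of Lemma~\ref{lemma9}.
\end{itemize}
The same identity also makes necessity self-contained, without invoking Shapley's fixed-point characterization. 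Deviating at a single state $s$ to an action $a$ gives $v^{\pi'}_s-v^\pi_s=[(I-\gamma P^{\pi'})^{-1}]_{ss}\,r^\pi_a$, and this diagonal entry is at least $1$. So counter-optimality of either player forces the required sign of $r^\pi_a$.
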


For any fixed strategy of either player, finding the other player's optimal counter strategy in a TBDG reduces to a deterministic MDP. The following result is a corollary of \cite[Theorem 3.7]{post2015simplex}.

\begin{prop}\label{thm-MDP}
For a TBDG, using the simplex method with the simple pivoting rule, for any fixed strategy $\sigma$ of Player 1, the optimal counter strategy $\pi^2(\sigma)$ can be found in $O(n^3m^2\log^2n)$ pivot steps of simplex method; and for any fixed strategy $\tau$ of Player 2, the counter optimal strategy $\pi^1(\tau)$ can be found in $O(n^3m^2\log^2n)$ pivot steps of simplex method.
\end{prop}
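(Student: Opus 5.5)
Proposition~\ref{thm-MDP} reduces to the deterministic-MDP result \cite[Theorem 3.7]{post2015simplex}. The plan is to show that fixing one player's strategy leaves a one-player deterministic discounted MDP of size at most $n$ states and $m$ actions, and that the simplex method on it is exactly the simplex method on the corresponding restriction of \eqref{prob}.

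\textbf{Reduction.} Fix a strategy $\sigma$ of Player~1; the case of a fixed $\tau$ for Player~2 is symmetric. Delete from the LP-like formulation \eqref{prob} every column $a\in\mcA^1$ with $a\notin\sigma$. This leaves one action at each state of $\mcS^1$ and the full action sets $\mcA_s$ at each $s\in\mcS^2$. The resulting problem has:
\begin{itemize}
\item the constraints $A'\bfx=\bfb$, $\bfx\ge\bfzero$, with $n$ rows and $m'\le m$ columns;
\item every column of the form $\bfe_s-\gamma\bfp_a$ with $\bfp_a$ a point mass;
\item a single optimizing player, who maximizes $\bfc^T\bfx$. Negating $\bfc$ turns this into minimization.
\end{itemize}
It is therefore a deterministic discounted MDP with $n$ states, $m'\le m$ actions and the same $\gamma$. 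States in $\mcS^1$ simply have a single forced action, which Post and Ye's model permits. By Proposition~\ref{prop-basic}(iii), its basic feasible solutions are exactly the profiles $(\sigma,\tau)$ with $\tau$ ranging over Player~2 strategies, and the polytope is nondegenerate.

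\textbf{Optimality.} Optimality of this MDP means $\bfr\le\bfzero$ on the Player~2 columns, with complementary slackness, for the value vector $\bfv^{(\sigma,\tau)}$. The MDP optimum $\tau$ gives $\bfv^{(\sigma,\tau)}\ge\bfv^{(\sigma,\tau')}$ componentwise for every $\tau'$. The componentwise statement holds because an optimal MDP policy is optimal from every initial state, and this is exactly the definition of $\pi^2(\sigma)$.

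\textbf{Complexity.} Apply \cite[Theorem 3.7]{post2015simplex}: the simplex method with the most-negative-reduced-cost rule (after the sign flip) terminates in $O(n^3m'^2\log^2 n)\subseteq O(n^3m^2\log^2 n)$ pivots, uniformly in $\gamma\in(0,1)$. The scaling of the right-hand side by $(1-\gamma)$ in \eqref{prob} changes neither the bases nor the reduced costs, so the pivot sequence is the same as in Post and Ye's unscaled formulation.

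\textbf{Main obstacle.} The only delicate point is checking that Post and Ye's hypotheses hold after restriction. They require an actual deterministic MDP with a uniform discount factor and an all-ones (here, scaled) right-hand side. Forced single-action states are allowed, and their counts use the total numbers of states and actions. If their bound were stated only for MDPs in which every state has at least two actions, I would instead eliminate the forced Player~1 states. Substituting each forced action into its predecessors would produce non-uniform discounts ($\gamma^k$ along forced chains), which their theorem does not cover. So I would keep the forced states as degree-one states and rely on their analysis, which only counts cycle and path actions in $\Cyc(\pi)$ and $\Path(\pi)$ and is insensitive to states having a single option.
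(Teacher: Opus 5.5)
Your proposal is correct and follows the same route as the paper. The paper simply notes that fixing one player's strategy leaves a one-player deterministic discounted MDP, and then invokes \cite[Theorem 3.7]{post2015simplex}; you make that reduction explicit and add checks the paper leaves implicit: deleting the fixed player's unused columns of \eqref{prob}, the $(1-\gamma)$ scaling not changing bases or reduced costs, forced single-action states being harmless, and LP optimality of the restricted problem giving the componentwise counter-optimality that defines $\pi^2(\sigma)$.
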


\section{Simple strategy-iteration algorithm and complexity results}\label{sec:alg}


In this section, we begin by studying a general simple strategy-iteration algorithm for TBSGs, rather than an algorithm tailored specifically to TBDFGs. Its input contains no forward-structure certificate, and its execution does not test for mixed-player cycles or modify its pivot rule when the property is present. The TBDFG assumption enters only in the convergence proof. Accordingly, the main theorem of this section provides a \emph{structure-oblivious} performance guarantee: the same algorithm may be run on a general game, but it has a strongly polynomial complexity whenever the instance belongs to the TBDFG class. 

The strategy-iteration method extends the original policy-iteration method for MDPs \cite{howard1960dynamic}. It updates the action at a state $s\in\mcS^1$ with the most negative reduced cost in $\bfr$ under the strategy profile $\pi=(\sigma,\pi^2(\sigma))$. Specifically, for every state $s\in\mcS^1$, if $\delta_s=\min\{r_a:a\in\mcA_s\}<0$, then the current action in strategy $\sigma$ is switched to $a_s=\arg\min\{r_a:a\in\mcA_s\}$ at state $s$; ties may be broken arbitrarily in each state. The method then repeats with the new strategy profile $\pi^+=(\sigma^+,\pi^2(\sigma^+))$, where possibly multiple actions in $\sigma$ are replaced. Each iteration can be completed in strongly polynomial time (see Proposition \ref{thm-MDP}). The difference between the standard strategy-iteration algorithm and the simple strategy-iteration algorithm is that the latter replaces the action in only one state $s\in\mcS^1$, namely, the state with the smallest negative reduced cost.

We now state the simple strategy-iteration algorithm, which coincides with Algorithm 1 in \cite{jia2020towards}. In each iteration, Algorithm \ref{alg} updates Player 1's strategy through a simple pivoting step, with ties broken arbitrarily, and then finds Player 2's optimal strategy against the updated Player 1 strategy using simplex method with simple pivoting rule.  The algorithm can be described as follows.

\begin{algorithm}[H]
\caption{Simple strategy-iteration algorithm}\label{alg}
\begin{algorithmic}[1]
\REQUIRE Initialize $\pi_0=(\sigma_0,\tau_0)$, where $\tau_0=\pi^2(\sigma_0)$ and $\bfv^{\pi_0}$ are as defined in Proposition \ref{prop-basic}. Set $k=1$.
\STATE Run one simplex step for Player 1, and denote the updated Player 1 strategy by $\sigma_k$; namely, pivot in the action with the smallest negative modified cost for Player 1.
\STATE Call the Simplex method with the simple pivoting rule in \cite[Section 2]{post2015simplex} with initial strategy $\tau_{k-1}$ to find the optimal strategy for Player 2, and set $\tau_k=\pi^2(\sigma_k)$.
\STATE Update the value function $\bfv^{\pi_k}$, where $\pi_k=(\sigma_k,\tau_k)$.
\STATE If $\bfv^{\pi_k}\neq\bfv^{\pi_{k-1}}$, set $k\leftarrow k+1$ and go to Step 1. Otherwise, terminate the algorithm and return $\pi_k=(\sigma_k,\tau_k)$.
\end{algorithmic}
\end{algorithm}

\begin{rem}
\begin{itemize}
\item In Step 1 of Algorithm~\ref{alg}, Player 1's strategy is not updated if there is no action with negative modified cost. In particular, if $\pi_0$ is already an optimal strategy profile, then Step 1 leaves the strategy profile unchanged, and the algorithm terminates without performing any update.
\item By Proposition \ref{prop-basic}(iii), the feasible polytope in \eqref{prob} is nondegenerate, and its vertices are in one-to-one correspondence with strategy profiles of the TBDG. Hence the simplex pivot in Step 1 of Algorithm \ref{alg} is well defined as a move between strategy profiles. Moreover, by Proposition \ref{thm-MDP}, once Player 1's strategy $\sigma_k$ is fixed, Player 2's optimal counter strategy $\tau_k=\pi^2(\sigma_k)$ in Step 2 can be computed by the simplex method with the simple pivoting rule in strongly polynomial iteration time. Therefore each iteration of Algorithm \ref{alg} is well defined as a strategy-iteration procedure for TBDGs.
\end{itemize}
\end{rem}

It is useful to distinguish the algorithmic template from the complexity analysis. Algorithm~\ref{alg} is the same simple strategy-iteration scheme studied for general turn-based stochastic games in \cite{hansen2013strategy,jia2020towards}; the novelty here is not a new pivot rule, but a stronger guarantee for the TBDFG subclass. In particular, the bound proved below is strongly polynomial when the uniform discount factor is part of the input, and it contains no dependence on $1/(1-\gamma)$. The proof is inspired in part by the reduced-cost and flux analysis of the simplex method for deterministic MDPs in \cite{post2015simplex}, but the two-player setting creates a difficulty that is absent from a one-player minimization problem. After each Player~1 pivot, Player~2 is reoptimized, so the resulting sequence is governed by a minimax interaction rather than by a single monotone simplex trajectory. The TBDFG condition rules out directed cycles that contain actions of both players, thereby excluding mixed-player recurrent feedback. Nevertheless, Player~1 and Player~2 actions may still alternate along transient paths before the play enters a one-player recurrent component. This transient interaction is the main new obstacle in the proof: unlike the one-player simplex analysis of \cite{post2015simplex}, progress cannot be measured solely by the pivot sequence of Player~1, since each such pivot may change Player~2's optimal response; the analysis below must therefore track the cumulative effects of these counter-optimizations along mixed-player paths, which requires new estimates beyond those used in the MDP setting in \cite{post2015simplex}.

We next prove that Algorithm~\ref{alg} computes optimal strategies for both players in time polynomial in $m$, $n$, and $\log n$, as stated in Theorem~\ref{thm}. We begin with a property of optimal counter strategies that holds for all TBDGs.


\begin{lemma}\label{l-opt}
Denote by $\nu^1$ the objective value of any strategy profile $(\sigma,\pi^2(\sigma))$, and denote by $\nu^2$ the objective value of any strategy profile $(\pi^1(\tau),\tau)$. Then $\nu^2\leq \nu^*\leq\nu^1$. Moreover, the corresponding reduced-cost vector satisfies
\[
r^{(\sigma,\pi^2(\sigma))}_a\leq0\quad\forall a\in \mcA^2\quad\mbox{and}\quad r^{(\pi^1(\tau),\tau)}_a\geq0\quad\forall a\in \mcA^1.
\]
and $r^\pi_a=0$ for all actions taken in the strategy profile.
\end{lemma}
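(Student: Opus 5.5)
We prove the three assertions in the order they are stated, and we rely throughout on Proposition~\ref{prop-basic}.

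\textbf{Zero reduced costs on the profile.} Fix any profile $\pi$. By Proposition~\ref{prop-basic}(i), $\bfv^\pi=\bfc^\pi+\gamma(P^\pi)^T\bfv^\pi$. Read row $s$ of this identity for the action $a=\pi_s$. It gives $v^\pi_s=c_a+\gamma\bfp_a^T\bfv^\pi$. The column of $A$ indexed by $a$ is $\bfe_s-\gamma\bfp_a$, so $r^\pi_a=c_a-v^\pi_s+\gamma\bfp_a^T\bfv^\pi=0$. This proves the last assertion.

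\textbf{Sign conditions.} Fix $\sigma$, let $\tau=\pi^2(\sigma)$ and $\pi=(\sigma,\tau)$. Suppose some $a\in\mcA_s$ with $s\in\mcS^2$ had $r^\pi_a>0$. Let $\tau'$ be $\tau$ with its action at $s$ switched to $a$, and set $\pi'=(\sigma,\tau')$. By Proposition~\ref{prop-basic}(v),
\[
\bfb^T(\bfv^{\pi'}-\bfv^\pi)=(\bfr^\pi)^T\bfx^{\pi'}=r^\pi_a x^{\pi'}_a .
\]
All other actions of $\pi'$ lie in $\pi$ and have zero reduced cost by the first step. Proposition~\ref{prop-basic}(iv) gives $x^{\pi'}_a\ge 1-\gamma>0$. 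Hence $\bfb^T\bfv^{\pi'}>\bfb^T\bfv^\pi$.

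This contradicts the fact that $\tau$ is an optimal counter strategy, which requires $\bfv^{\pi'}\le\bfv^\pi$ componentwise and so $\bfb^T\bfv^{\pi'}\le\bfb^T\bfv^\pi$ because $\bfb>\bfzero$. Therefore $r^{(\sigma,\pi^2(\sigma))}_a\le 0$ for all $a\in\mcA^2$. The statement $r^{(\pi^1(\tau),\tau)}_a\ge0$ for $a\in\mcA^1$ follows by the symmetric argument.

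\textbf{Value sandwich.} Again take $\pi=(\sigma,\pi^2(\sigma))$ and let $\pi^*=(\sigma^*,\tau^*)$ be an optimal profile. Compare $\pi$ with $\pi'=(\sigma,\tau^*)$. Since $\pi^2(\sigma)$ is an optimal counter strategy against $\sigma$, $\bfv^{\pi'}\le\bfv^\pi$, and so $\bfb^T\bfv^{\pi'}\le\nu^1$.

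Next compare $\pi'$ with $\pi^*$. Since $\sigma^*$ is an optimal counter strategy against $\tau^*$, $\bfv^{\pi^*}\le\bfv^{\pi'}$, and so $\nu^*=\bfb^T\bfv^{\pi^*}\le\bfb^T\bfv^{\pi'}$. Chaining the two inequalities gives $\nu^*\le\nu^1$. The bound $\nu^2\le\nu^*$ follows symmetrically by comparing with $(\sigma^*,\tau)$. In both steps we use $\nu=\bfc^T\bfx^\pi=\bfb^T\bfv^\pi$ from Proposition~\ref{prop-basic}(v)--(vi).

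\textbf{Main obstacle.} The delicate point is the contradiction step in the sign conditions. It needs the definition of optimal counter strategy to be componentwise domination over \emph{all} deviations, including single-state switches. It also needs the switched action to carry strictly positive flux, which is supplied by the lower bound in Proposition~\ref{prop-basic}(iv). Only the aggregated quantity $\bfb^T\bfv$ is compared, so no finer monotonicity of the value vector is required.
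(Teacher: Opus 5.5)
Your proof is correct, and it differs from the paper's mainly in the step $\nu^*\le\bfb^T\bfv^{(\sigma,\tau^*)}$. Both arguments pass through the intermediate profile $(\sigma,\tau^*)$ and use counter-optimality of $\pi^2(\sigma)$ to get $\bfb^T\bfv^{(\sigma,\tau^*)}\le\nu^1$. For the other step, you read $\bfv^{\pi^*}\le\bfv^{(\sigma,\tau^*)}$ directly off the definition of an optimal profile, since $\sigma^*$ is an optimal counter strategy to $\tau^*$. The paper instead writes the gap as $(\bfr^{\pi^*})^T\bfx^{(\sigma,\tau^*)}$ via Proposition~\ref{prop-basic}(v). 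It then shows this is nonnegative from the sign pattern of the optimal reduced costs: $r^{\pi^*}_a\ge 0$ on $\mcA^1$, $r^{\pi^*}_a=0$ on $\tau^*$, and zero flux on unused Player~2 actions. That is the certificate of Proposition~\ref{prop-basic}(vii), or equivalently the lemma's own sign assertion applied at $\pi^*$. Your version is purely order-theoretic and makes the three assertions logically independent of one another. The paper's version exhibits the gap as a flux-weighted sum of reduced costs, which is the form it later bounds term by term (e.g.\ in Lemma~\ref{FG-elim}). For the sign conditions and the vanishing of $r^\pi_a$ on $\pi$, the paper only appeals to the MDP structure of the counter-strategy problem. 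Your single-switch argument supplies that step explicitly: the Bellman equation gives the zeros, and identity (v) together with the flux bound $x^{\pi'}_a\ge 1-\gamma$ from (iv) gives the signs.
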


\begin{proof}
For any strategy $\sigma$ of Player 1, let the strategy profile be $\pi=(\sigma,\pi^2_*)$. Then we have
\[
\bfc^T\bfx^\pi-\nu^*=(\bfr^{\pi^*})^T\bfx^\pi=\sum_{a\in\mcA^1}r^{\pi^*}_ax^\pi_a+\sum_{a\in \mcA^2}r^{\pi^*}_ax^\pi_a\geq0,
\]
where the first equality follows from Proposition \ref{prop-basic}(v) and \eqref{vstar}, and the inequality follows from $\bfx^\pi\geq\bfzero$, $r^{\pi^*}_a\geq0$ for any $a\in \mcA^1$, $r^{\pi^*}_a=0$ for any $a\in\pi^2_*$, and $\bfx^\pi_a=0$ for any $a\in \mcA^2/\pi^2_*$. On the other hand, $\nu^1$ is the objective value of the strategy profile $(\sigma,\pi^2(\sigma))$, where $\pi^2(\sigma)$ is the optimal counter strategy of Player 2 against $\sigma$ and maximizes the objective value. Thus $\nu^1$ should be at least the value of the strategy profile $\pi=(\sigma,\pi^2_*)$, namely $\nu^1\geq\nu^*$. Similarly, one can prove $\nu^*\geq\nu^2$.

Finally, the reduced-cost sign property follows directly from the counter-optimality of the other player, since the resulting problem is an MDP.
\end{proof}



We next prove that the value vector decreases monotonically throughout the iterations. The proof is inspired by \cite[Lemma 5.1]{hansen2013strategy} and holds for TBDGs.

\begin{lemma}\label{lemma9}
The value function obtained by Algorithm \ref{alg} decreases strictly at each iteration, i.e., $\bfv^{\pi_{k-1}}\geq\bfv^{\pi_k}$ and $\bfb^T\bfv^{\pi_{k-1}}>\bfb^T\bfv^{\pi_k}$, unless Algorithm \ref{alg} terminates.
\end{lemma}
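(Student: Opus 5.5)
The plan is to compare the two consecutive profiles $\pi_{k-1}=(\sigma_{k-1},\tau_{k-1})$ and $\pi_k=(\sigma_k,\tau_k)$ through the intermediate profile $\pi'=(\sigma_k,\tau_{k-1})$, splitting the change into a Player~1 step and a Player~2 re-optimization step. Step~1 of Algorithm~\ref{alg} changes the action at a single state $s\in\mcS^1$ to an action $a$ with $r^{\pi_{k-1}}_a<0$. All other actions of $\pi'$ coincide with actions of $\pi_{k-1}$, so by Lemma~\ref{l-opt} their reduced costs are zero.

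\textbf{Step 1: $\pi_{k-1}$ versus $\pi'$.} Since $r^{\pi_{k-1}}$ vanishes on $\pi_{k-1}$, we have $\bfv^{\pi'}-\bfv^{\pi_{k-1}}=(I-\gamma P^{\pi'})^{-T}\bfr^{\pi_{k-1}}_{\pi'}$, where $\bfr^{\pi_{k-1}}_{\pi'}$ is the state-indexed vector of reduced costs of the actions used in $\pi'$. This follows from Proposition~\ref{prop-basic}(i) and the definition $\bfr=\bfc-A^T\bfv$ via the standard componentwise identity $\bfc^{\pi'}-(I-\gamma P^{\pi'})^T\bfv^{\pi_{k-1}}=\bfr^{\pi_{k-1}}_{\pi'}$. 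That vector has a single nonzero entry $r^{\pi_{k-1}}_a<0$, at state $s$, and $(I-\gamma P^{\pi'})^{-T}=\sum_t\gamma^t (P^{\pi'})^{tT}$ is entrywise nonnegative with positive diagonal. Hence $\bfv^{\pi'}\le\bfv^{\pi_{k-1}}$, with strict inequality at $s$. Pairing with $\bfb>\bfzero$ gives $\bfb^T\bfv^{\pi'}<\bfb^T\bfv^{\pi_{k-1}}$; equivalently, Proposition~\ref{prop-basic}(v) gives $\bfb^T(\bfv^{\pi'}-\bfv^{\pi_{k-1}})=r^{\pi_{k-1}}_a x^{\pi'}_a<0$, since $x^{\pi'}_a\ge 1-\gamma>0$ by Proposition~\ref{prop-basic}(iv).

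\textbf{Step 2: $\pi'$ versus $\pi_k$.} Here I must show that Player~2's re-optimization does not push values above $\bfv^{\pi_{k-1}}$. I would use $\sigma_k$ fixed and Lemma~\ref{l-opt}'s sign property at $\pi_k$: $r^{\pi_k}_b\le 0$ for all $b\in\mcA^2$, in particular for the actions of $\tau_{k-1}$. It is cleaner to compare $\pi_k$ directly with $\pi_{k-1}$. Write $\bfv^{\pi_k}-\bfv^{\pi_{k-1}}=-(I-\gamma P^{\pi_{k-1}})^{-T}\bfr^{\pi_k}_{\pi_{k-1}}$, again using that $r^{\pi_k}$ vanishes on $\pi_k$. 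The entries of $\bfr^{\pi_k}_{\pi_{k-1}}$ are of two kinds. On Player~2 states they are $r^{\pi_k}_b\le0$ for $b\in\tau_{k-1}$. On Player~1 states other than $s$ they are zero, since those actions lie in $\sigma_k$. The only uncontrolled entry is at $s$, namely $r^{\pi_k}_{\sigma_{k-1}(s)}$, whose sign is unknown, so this direct comparison fails at one coordinate.

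\textbf{Fix, and the main obstacle.} The cleaner route is an MDP monotonicity argument. With $\sigma_k$ fixed, $\tau_k$ maximizes values, but we need an upper bound on $\bfv^{\pi_k}$. For that I would invoke optimality of $\tau_k$ to show that $\bfv^{\pi_{k-1}}$ is a supersolution of Player~2's Bellman equation under $\sigma_k$. For $t\in\mcS^2$ and any $b\in\mcA_t$, $c_b+\gamma\bfp_b^T\bfv^{\pi_{k-1}}\le v^{\pi_{k-1}}_t$ because $r^{\pi_{k-1}}_b\le0$ (Lemma~\ref{l-opt}, as $\tau_{k-1}=\pi^2(\sigma_{k-1})$). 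For $t\in\mcS^1$, the action $\sigma_k(t)$ satisfies $c+\gamma\bfp^T\bfv^{\pi_{k-1}}\le v^{\pi_{k-1}}_t$, with equality off $s$ and strict inequality at $s$. Thus $\bfv^{\pi_{k-1}}\ge T_{\sigma_k}\bfv^{\pi_{k-1}}$, where $T_{\sigma_k}$ is the monotone, $\gamma$-contractive max-Bellman operator of Player~2 with $\sigma_k$ fixed. Iterating, $\bfv^{\pi_{k-1}}\ge \lim T^j_{\sigma_k}\bfv^{\pi_{k-1}}=\bfv^{\pi_k}$. Strictness of $\bfb^T\bfv$ then follows because the inequality is strict at $s$ after the first application of $T_{\sigma_k}$, giving $v^{\pi_k}_s<v^{\pi_{k-1}}_s$ unless no improving pivot exists, which is exactly termination. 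The main obstacle is Step~2, the Player~2 re-optimization. The supersolution argument is what circumvents it, and the care needed is in confirming that the fixed point of $T_{\sigma_k}$ is $\bfv^{(\sigma_k,\pi^2(\sigma_k))}$. That identification is standard MDP theory via Proposition~\ref{thm-MDP}.
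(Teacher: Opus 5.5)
Your proof is correct, but it reaches monotonicity by a different mechanism than the paper. The paper never uses the intermediate profile $(\sigma_k,\tau_{k-1})$ or a Bellman operator. It writes
\[
\bfv^{\pi_k}-\bfv^{\pi_{k-1}}=(I-\gamma P^{\pi_k})^{-T}\bfr^{\pi_{k-1}}_{\pi_k},
\]
that is, it evaluates the reduced costs of the \emph{old} profile on the actions of the \emph{new} one. This is the reverse of the direct comparison you tried in Step~2 and abandoned. In this direction every entry is controlled:
\begin{itemize}
\item Player~1 entries are zero, except the pivoted one, which is negative.
\item Player~2 entries are $r^{\pi_{k-1}}_b\le 0$ for every $b\in\mcA^2$, in particular for $b\in\tau_k$. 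This holds by Lemma~\ref{l-opt} applied to $\tau_{k-1}=\pi^2(\sigma_{k-1})$, which is exactly the fact you use to build your supersolution.
\end{itemize}
Strictness then comes from the flux identity $\bfb^T(\bfv^{\pi_{k-1}}-\bfv^{\pi_k})=-(\bfr^{\pi_{k-1}}_{\pi_k})^T\bfx^{\pi_k}_+\ge -r^{\pi_{k-1}}_{a'}x^{\pi_k}_{a'}>0$, not from a strict inequality at coordinate $s$.

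Your operator argument is the nonlinear form of the same supersolution idea, and it is valid. Your strictness step, $v^{\pi_k}_s\le (T_{\sigma_k}\bfv^{\pi_{k-1}})_s<v^{\pi_{k-1}}_s$ via monotone iteration, is sound. It does need the extra identification of the fixed point of $T_{\sigma_k}$ with $\bfv^{\pi_k}$. That identification follows from Lemma~\ref{l-opt}, which gives $T_{\sigma_k}\bfv^{\pi_k}=\bfv^{\pi_k}$, together with uniqueness of the contraction's fixed point. It does not follow from Proposition~\ref{thm-MDP}, which is only a pivot-count bound.

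The paper's linear route buys two things:
\begin{itemize}
\item Within a single step it uses only the counter-optimality of $\tau_{k-1}$, not that of $\tau_k$.
\item It yields the flux-weighted decrease $-r^{\pi_{k-1}}_{a'}x^{\pi_k}_{a'}$, which Lemma~\ref{lem:sig} reuses with $x^{\pi_k}_{a'}\ge 1$ for a cycle action. Your coordinatewise bound gives only $(1-\gamma)|r^{\pi_{k-1}}_{a}|$. That suffices for \eqref{l18-e1} but not for the cycle case.
\end{itemize}
Your Step~1 comparison with $(\sigma_k,\tau_{k-1})$ is correct but unnecessary.
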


\begin{proof}
Without loss of generality, we only prove the statement for the first iteration.

First, we show the monotonicity of the value vector. By Lemma \ref{l-opt} and $\tau_0=\pi^2(\sigma_0)$ (see Input of Algorithm \ref{alg}), we have $r^{\pi_0}_a\leq0$ for all $a\in\mcA^2$ and $r^{\pi_0}_a=0$ for all $a\in\pi_0$. Moreover, according to the updating rule used in Step 1 of Algorithm \ref{alg}, we have $r^{\pi_0}_a=\min\{r^{\pi_0}_{a'}:a'\in\mcA^1\}$ for $a\in\sigma_1\setminus \sigma_0$. This implies that $r^{\pi_0}_a\leq0$ for all $a\in\pi_1$. Using this together with Proposition \ref{prop-basic}(i) and (v), we have
\[
\bfv^{\pi_1}-\bfv^{\pi_0}=(I-\gamma P^{\pi_1})^{-T}\left(\bfc^{\pi_1}-(I-\gamma P^{\pi_1})^T\bfv^{\pi_0}\right)=(I-\gamma P^{\pi_1})^{-T}\bfr^{\pi_0}_{\pi_1}\leq0
\]
where the first equality follows from Proposition \ref{prop-basic}(i), the second equality follows from the definition of $\bfr$ in Proposition \ref{prop-basic}(v), and the inequality follows from $r^{\pi_0}_a\leq0$ for all $a\in\sigma_1$ and the fact that $P^{\pi_1}$ is the probability transition matrix. Consequently, the obtained value function decreases.

%


Next, we show that $\bfb^T\bfv^{\pi_0}>\bfb^T\bfv^{\pi_1}$ unless Algorithm \ref{alg} terminates. If not, we have $\min\{r^{\pi_0}_a:a\in\mcA^1\}<0$. Denoting $a'=\argmin\{r^{\pi_0}_a:a\in\mcA^1\}$, we have
\[
\bfb^T\bfv^{\pi_0}-\bfb^T\bfv^{\pi_1}=-(\bfr^{\pi_0}_{\pi_1})^T\bfx^{\pi_1}_+=-\sum_{a\in\sigma_1}r^{\pi_0}_ax^{\pi_1}_a-\sum_{s\in\tau_1}r^{\pi_0}_ax^{\pi_1}_a\geq-\sum_{a\in\sigma_1}r^{\pi_0}_a\bfx^{\pi_1}_a=-r^{\pi_0}_{a'}x^{\pi_1}_{a'}>0
\]
where the first equality follows from Proposition \ref{prop-basic}(v), the first inequality follows from $\tau_0=\pi^2(\sigma_0)$ which implies that $r^{\pi_0}_a\leq0$ for all $a\in\tau_1$, the last equality follows from the updating scheme in Step 1 of Algorithm \ref{alg}, and the last inequality follows from the fact that $r^{\pi_0}_{a'}<0$. Hence the result follows.
\end{proof}

\subsection{Improvement with a new cycle variable}
In this subsection, we show that the objective value $\bfb^T\bfv$ improves significantly when a pivoted-in action of Player 1 becomes a cycle action. We first prove the following objective-reduction lemmas for the case in which a new cycle variable is introduced.

\begin{lemma}\label{lemma10}
Let $\bfv^{\pi^*}$ be given in \eqref{vstar} and $\Delta_k=\min\{r^{\pi_k}_a:a\in\mcA^1\}$ be the minimum modified cost of the strategy profile $\pi_k$ at the beginning of the $(k+1)$-th iteration. Then, we have
\begin{equation}\label{e1}
\bfb^T\bfv^{\pi^*}\geq\bfb^T\bfv^{\pi_k}-n|\Delta_k|.
\end{equation}
\end{lemma}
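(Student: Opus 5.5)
The plan is to compare the current profile $\pi_k=(\sigma_k,\tau_k)$, with $\tau_k=\pi^2(\sigma_k)$, against the profile $\pi'=(\pi^1_*,\tau_k)$: Player 1 plays its optimal strategy $\pi^1_*$, while Player 2 keeps its current response $\tau_k$. Proposition~\ref{prop-basic}(v) gives
\[
\bfb^T\bfv^{\pi'}-\bfb^T\bfv^{\pi_k}=(\bfr^{\pi_k})^T\bfx^{\pi'}=\sum_{a\in\pi^1_*}r^{\pi_k}_a x^{\pi'}_a+\sum_{a\in\tau_k}r^{\pi_k}_a x^{\pi'}_a .
\]
By Lemma~\ref{l-opt}, $r^{\pi_k}_a=0$ for every action $a\in\tau_k$, since these actions are used in $\pi_k$. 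So the second sum vanishes.

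In the first sum, each term satisfies $r^{\pi_k}_a\ge \Delta_k$, because $\Delta_k$ is the minimum reduced cost over $\mcA^1$. We may assume $\Delta_k\le 0$: if $\Delta_k>0$, the reduced costs of the actions of $\sigma_k$ would not all be zero, contradicting Lemma~\ref{l-opt}. Hence
\[
\sum_{a\in\pi^1_*}r^{\pi_k}_a x^{\pi'}_a\ \ge\ \Delta_k\sum_{a\in\pi^1_*}x^{\pi'}_a\ \ge\ -|\Delta_k|\sum_{a\in\pi'}x^{\pi'}_a=-n|\Delta_k|,
\]
where the last step uses the flux total $\sum_{a\in\pi'}x^{\pi'}_a=n$ from Proposition~\ref{prop-basic}(iv). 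This gives
\[
\bfb^T\bfv^{\pi'}\ \ge\ \bfb^T\bfv^{\pi_k}-n|\Delta_k|.
\]

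It remains to show $\bfb^T\bfv^{\pi^*}\ge \bfb^T\bfv^{\pi'}$. Since $\pi^1_*$ is fixed in $\pi'$ and $\pi^2(\pi^1_*)=\pi^2_*$ is Player 2's best response to it, we have $\bfv^{(\pi^1_*,\tau_k)}\le \bfv^{(\pi^1_*,\pi^2_*)}=\bfv^{\pi^*}$ componentwise, by the definition of an optimal counter strategy. Because $\bfb>\bfzero$, this yields $\bfb^T\bfv^{\pi'}\le \bfb^T\bfv^{\pi^*}$. The same conclusion also follows from the sign pattern in \eqref{opt}, via $\bfb^T(\bfv^{\pi'}-\bfv^{\pi^*})=(\bfr^{\pi^*})^T\bfx^{\pi'}\le 0$: here $r^{\pi^*}_a=0$ on $\pi^1_*$ and $r^{\pi^*}_a\le 0$ on $\mcA^2$. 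Chaining the two inequalities gives \eqref{e1}.

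The step that needs the most care is the choice of comparison profile. It must keep Player 2's actions at zero reduced cost under $\pi_k$, so that only Player 1 terms, which are bounded below by $\Delta_k$, survive. It must also remain dominated by $\pi^*$ in the $\bfb$-weighted value. Using $\pi'=(\pi^1_*,\tau_k)$ achieves both. The naive choice $\pi^*$ itself fails: Player 2's actions in $\pi^2_*$ have reduced costs $r^{\pi_k}_a\le 0$ of uncontrolled size, which pull the bound in the wrong direction.
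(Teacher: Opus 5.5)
Your proof is correct. It has the same two-step shape as the paper's: pass through an intermediate profile that keeps Player~2 at $\tau_k$, bound its value from below by $\bfb^T\bfv^{\pi_k}-n|\Delta_k|$, and bound it from above by $\bfb^T\bfv^{\pi^*}$.

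The difference is which intermediate profile you use.
\begin{itemize}
\item The paper uses $(\sigma_k^*,\tau_k)$, where $\sigma_k^*=\pi^1(\tau_k)$ is Player~1's best response to $\tau_k$. Its upper comparison \eqref{e2} is then $\nu^2\le\nu^*$ from Lemma~\ref{l-opt}. Its lower bound \eqref{e3} is attributed to the deterministic-MDP simplex analysis rather than written out.
\item You use $(\pi^1_*,\tau_k)$. Your upper comparison comes directly from $\pi^2_*$ being a best response to $\pi^1_*$, or equivalently from the sign pattern of $\bfr^{\pi^*}$. You derive the lower bound explicitly from Proposition~\ref{prop-basic}(iv)--(v).
\end{itemize}

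Your computation makes two things visible that the paper leaves implicit. First, the lower bound holds for \emph{every} Player~1 strategy paired with $\tau_k$, so the optimality of $\sigma_k^*$ is not needed for \eqref{e3}. Second, the half of Lemma~\ref{l-opt} used for \eqref{e2} is itself proved by comparing with the profile $(\pi^1_*,\tau)$. So your argument is the paper's chain with the best-response link removed.

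Your remark that $\Delta_k\le 0$ always holds, because $r^{\pi_k}_a=0$ on $\sigma_k$, also makes the paper's separate case $\Delta_k\ge 0$ unnecessary.
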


\begin{proof}
If $\Delta_k\geq0$, then $\bfv^{\pi_k}=\bfv^{\pi^*}$ is the optimal value function, and this inequality holds automatically.

Now, consider the case $\Delta_k<0$. Let $\sigma_k^*=\pi^1(\tau_k)$. Then, it follows from Lemma \ref{l-opt} that
\begin{equation}\label{e2}
\bfb^T\bfv^{(\sigma_k^*,\tau_k)}\leq\bfb^T\bfv^{\pi^*}.
\end{equation}
%
Additionally, based on the definition of $\sigma_k^*$, we have
\begin{equation}\label{e3}
\bfb^T\bfv^{(\sigma_k^*,\tau_k)}\geq\bfb^T\bfv^{(\sigma_k,\tau_k)}-n|\Delta_k|.
\end{equation}
Since $\Delta_k$ has the same interpretation as the modified cost in linear programming, this inequality follows from the simplex-method analysis for MDPs (see \cite{post2015simplex}). Consequently, \eqref{e1} follows by combining \eqref{e2} and \eqref{e3}.
\end{proof}

We next show that a substantial improvement is achieved once a pivoted-in action becomes part of a cycle in a TBDFG.
\begin{lemma}\label{lem:sig}
Suppose the game is a TBDFG. Let $\bfv^{\pi^*}$ be given in \eqref{vstar}. Suppose a Player 1's cycle is created on the $k$-th iteration, then
\[
\bfb^T(\bfv^{\pi_k}-\bfv^{\pi^*})\leq\left(1-\frac{1}{n}\right)\bfb^T(\bfv^{\pi_{k-1}}-\bfv^{\pi^*}).
\]
\end{lemma}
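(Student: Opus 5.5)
We follow the standard route from the simplex analysis of deterministic MDPs in \cite{post2015simplex}. The goal is to combine a lower bound on the one-step decrease of $\bfb^T\bfv$ with the optimality gap bound of Lemma \ref{lemma10}. Let $a'$ be the Player 1 action pivoted in at iteration $k$, so that $r^{\pi_{k-1}}_{a'}=\Delta_{k-1}<0$. The hypothesis says that $a'$ lies on a cycle of $\pi_k=(\sigma_k,\tau_k)$.

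\textbf{Step 1: the created cycle is a pure Player 1 cycle.} Since the game is a TBDFG, every directed cycle of the game graph uses actions of only one player. The cycle of $\pi_k$ through $a'$ contains the Player 1 action $a'$, so all its actions belong to $\mcA^1$. This is the only place the forward property is used. Its purpose is to guarantee that the cycle, and hence the flux on $a'$, does not depend on Player 2's re-optimized response $\tau_k$. Since $a'$ is a cycle action of $\pi_k$, Proposition \ref{prop-basic}(iv) gives $x^{\pi_k}_{a'}\geq 1$.

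\textbf{Step 2: one-step decrease.} We repeat the chain of inequalities in the proof of Lemma \ref{lemma9} with $\pi_{k-1},\pi_k$ in place of $\pi_0,\pi_1$. The reduced costs satisfy $r^{\pi_{k-1}}_a=0$ for the unchanged Player 1 actions in $\sigma_k$. They also satisfy $r^{\pi_{k-1}}_a\leq 0$ for all $a\in\tau_k\subseteq\mcA^2$, by Lemma \ref{l-opt} applied to $\tau_{k-1}=\pi^2(\sigma_{k-1})$. Together these give
\[
\bfb^T\bfv^{\pi_{k-1}}-\bfb^T\bfv^{\pi_k}\geq -r^{\pi_{k-1}}_{a'}x^{\pi_k}_{a'}\geq |\Delta_{k-1}|.
\]
The subtle point is that $\tau_k$ differs from $\tau_{k-1}$. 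This is harmless because the Player 2 terms carry nonpositive reduced costs, so dropping them only weakens the inequality in the right direction.

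\textbf{Step 3: combine.} Lemma \ref{lemma10} at index $k-1$ gives $\bfb^T(\bfv^{\pi_{k-1}}-\bfv^{\pi^*})\leq n|\Delta_{k-1}|$. Hence
\[
\bfb^T(\bfv^{\pi_k}-\bfv^{\pi^*})\leq \bfb^T(\bfv^{\pi_{k-1}}-\bfv^{\pi^*})-|\Delta_{k-1}|\leq\Big(1-\frac1n\Big)\bfb^T(\bfv^{\pi_{k-1}}-\bfv^{\pi^*}).
\]

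\textbf{Main obstacle.} The delicate point is Step 1's justification that $x^{\pi_k}_{a'}\geq1$ holds in the profile after Player 2 re-optimizes, not merely in the intermediate profile $(\sigma_k,\tau_{k-1})$. I would read ``a cycle is created on the $k$-th iteration'' as referring to $\pi_k$ itself. If instead it refers to $(\sigma_k,\tau_{k-1})$, the same argument still works. The cycle consists solely of Player 1 states and actions, which are identical in $(\sigma_k,\tau_{k-1})$ and $(\sigma_k,\tau_k)$, so $a'$ remains on that cycle in $\pi_k$. This is exactly where the absence of mixed-player cycles is essential.
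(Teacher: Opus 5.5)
Your proposal is correct and follows the paper's argument. You use the forward property to ensure that the newly created cycle through $a'$ is purely Player~1 and therefore survives Player~2's re-optimization, so $x^{\pi_k}_{a'}\geq 1$. You then drop the nonpositive Player~2 reduced-cost terms to get $\bfb^T(\bfv^{\pi_{k-1}}-\bfv^{\pi_k})\geq -r^{\pi_{k-1}}_{a'}x^{\pi_k}_{a'}\geq|\Delta_{k-1}|$, and combine this with Lemma~\ref{lemma10}. Your discussion of whether the cycle is formed in $(\sigma_k,\tau_{k-1})$ or in $\pi_k$ simply spells out the paper's one-line remark that Player~2's response cannot affect this cycle.
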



\begin{proof}

Let $a'\in\mcA^{s'}$ be the newly introduced action of Player 1 corresponding to the minimum modified cost during the $k$-th iteration. Since a Player 1 cycle is created during this iteration and Player 2’s response cannot affect this cycle by the forward-progression property, $a'$ is a cycle action of $\pi_k$ at the end of this iteration. Observe that
\begin{align*}
\bfb^T(\bfv^{\pi_k}-\bfv^{\pi^*})=&\bfb^T(\bfv^{\pi_{k-1}}-\bfv^{\pi^*})-\bfb^T(\bfv^{\pi_{k-1}}-\bfv^{\pi_k})\\
=&\bfb^T(\bfv^{\pi_{k-1}}-\bfv^{\pi^*})+\sum_{a\in \pi_k}r^{\pi_{k-1}}_ax^{\pi_k}_a\\
\leq&\bfb^T(\bfv^{\pi_{k-1}}-\bfv^{\pi^*})+\sum_{a\in \sigma_k}r^{\pi_{k-1}}_ax^{\pi_k}_a\\
=&\bfb^T(\bfv^{\pi_{k-1}}-\bfv^{\pi^*})+r^{\pi_{k-1}}_{a'}x^{\pi_k}_{a'}\\
\leq&\bfb^T(\bfv^{\pi_{k-1}}-\bfv^{\pi^*})+r^{\pi_{k-1}}_{a'},
\end{align*}
where the first equality is obtained by rearranging the terms, the second equality follows from Proposition \ref{prop-basic}(v), the first inequality follows from $\tau_{k-1}=\pi^2(\sigma_{k-1})$, namely, $r^{\pi_{k-1}}_a\leq0$ for all $a\in\mcA^2$, the last equality follows from the updating scheme of Algorithm \ref{alg}, namely, $r^{\pi_{k-1}}_a\bfx^{\pi_k}_a=0$ for all $a\in\sigma_k$ and $a\neq a'$, and the last inequality follows from $r^{\pi_{k-1}}_{a'}\leq0$ and Proposition \ref{prop-basic}(iv) for this cycle action $a'$.

Note that $r^{\pi_{k-1}}_{a'}=\Delta_{k-1}$ in Lemma \ref{lemma10}. This together with the above inequality and Lemma \ref{lemma10} yields
\[
\bfb^T(\bfv^{\pi_k}-\bfv^{\pi^*})\leq\bfb^T(\bfv^{\pi_{k-1}}-\bfv^{\pi^*})+r^{\pi_{k-1}}_{a'}\leq\left(1-\frac{1}{n}\right)\bfb^T(\bfv^{\pi_{k-1}}-\bfv^{\pi^*}).
\]
\end{proof}

\subsection{Improvement before creating cycles}
In this subsection, we analyze what happens when no new cycles of Player 1 are created. To simplify the discussion, we make the following definition.
\begin{defi}[{\bf Optimal Cost without New Cycle}]
Let $\Pi_k$ be the set of strategy profiles such that any $\pi=(\sigma,\tau)\in\Pi_k$ satisfies $\tau=\pi^2(\sigma)$ and every Player 1's cycle in $\pi$ is also a cycle in $\pi_k$. Define 
\[
\tilde\nu^*_k=\min_{\pi\in\Pi_k}\bfb^T\bfv^\pi
\]
as the optimal cost for $\pi_k$ without creating any new Player 1's cycle relative to $\pi_k$.
\end{defi}

The following lemma gives a lower bound on the optimal cost without a new Player 1's cycle using the forward progression property of TBDFG.
\begin{lemma}\label{lemma17}
Suppose the game is a TBDFG. Let $\Delta_k=\min\{r^{\pi_k}_a:a\in\mcA^1\}$ be the minimum modified cost of the strategy profile $\pi_k$ at the beginning of the $(k+1)$-th iteration. Then we have
\[
\tilde\nu^*_k\geq\bfb^T\bfv^{\pi_k}-(1-\gamma)n^2|\Delta_k|.
\]
\end{lemma}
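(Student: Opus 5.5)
Fix an arbitrary $\pi=(\sigma,\tau)\in\Pi_k$ and bound $\bfb^T(\bfv^{\pi_k}-\bfv^\pi)$ from above by $(1-\gamma)n^2|\Delta_k|$. By Proposition~\ref{prop-basic}(v),
\[
\bfb^T(\bfv^{\pi}-\bfv^{\pi_k})=(\bfr^{\pi_k}_{\pi})^T\bfx^{\pi}_+=\sum_{a\in\sigma}r^{\pi_k}_ax^{\pi}_a+\sum_{a\in\tau}r^{\pi_k}_ax^{\pi}_a .
\]
Since $\tau_k=\pi^2(\sigma_k)$, Lemma~\ref{l-opt} gives $r^{\pi_k}_a\le 0$ for all $a\in\mcA^2$, so the Player 2 terms only help in the wrong direction. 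I would therefore replace $\tau$ by a convenient Player 2 strategy. Because $\tau=\pi^2(\sigma)$ is an optimal counter strategy, $\bfb^T\bfv^{(\sigma,\tau)}\ge\bfb^T\bfv^{(\sigma,\tau')}$ for every $\tau'$. It therefore suffices to lower bound $\bfb^T\bfv^{(\sigma,\tau')}$ for a well-chosen $\tau'$.

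The natural choice is $\tau'=\tau_k$, the current Player 2 strategy. Then all Player 2 actions in $(\sigma,\tau_k)$ belong to $\pi_k$ and have zero reduced cost, which yields
\[
\bfb^T(\bfv^{(\sigma,\tau_k)}-\bfv^{\pi_k})=\sum_{a\in\sigma\setminus\sigma_k}r^{\pi_k}_ax^{(\sigma,\tau_k)}_a\ge -|\Delta_k|\sum_{a\in\sigma\setminus\sigma_k}x^{(\sigma,\tau_k)}_a .
\]
Actions in $\sigma_k$ contribute zero, and every other Player 1 reduced cost is at least $\Delta_k$.

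The crux is showing that each $a\in\sigma\setminus\sigma_k$ is a path action in $(\sigma,\tau_k)$, so that $x_a\le(1-\gamma)n$ by Proposition~\ref{prop-basic}(iv). Summing over at most $n$ such actions then gives the desired $(1-\gamma)n^2|\Delta_k|$.

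Here the forward property enters. In a TBDFG every cycle of any strategy profile is single-player. A Player 1 cycle of $(\sigma,\tau_k)$ uses only Player 1 actions from $\sigma$, and is exactly a cycle of the strategy graph induced by $\sigma$ alone, with Player 2 playing no role. By the definition of $\Pi_k$, the cycles of $\pi$ are cycles of $\pi_k$. Since the Player 1 cycles of $\pi$ are determined by $\sigma$ alone, every Player 1 cycle of $(\sigma,\tau_k)$ is a cycle of $\pi_k$, and hence consists of actions of $\sigma_k$. Player 2 cycles of $(\sigma,\tau_k)$ consist of $\tau_k$ actions. Hence no new action $a\in\sigma\setminus\sigma_k$ lies on a cycle.

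The main obstacle is making this interchange of $\tau$ with $\tau_k$ rigorous. In particular, one must check that the definition of $\Pi_k$ constrains the cycle structure of $\sigma$ independently of Player 2's choice, and this is precisely the statement that cycles are monochromatic in a TBDFG. If one instead tried to work directly with $\tau$, the negative Player 2 reduced costs would have the wrong sign, so the detour through $\tau_k$ is essential. Taking the minimum over $\pi\in\Pi_k$ then completes the bound.
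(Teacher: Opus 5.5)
Your proposal is correct and follows essentially the same route as the paper: replace $\tau$ by $\tau_k$ using the counter-optimality of $\tau=\pi^2(\sigma)$, use $r^{\pi_k}_a=0$ on actions of $\pi_k$ and $r^{\pi_k}_a\ge\Delta_k$ on the other Player 1 actions, and then use the forward property to show the remaining Player 1 actions are path actions of $(\sigma,\tau_k)$, each with flux at most $(1-\gamma)n$. The only cosmetic difference is that you sum over $\sigma\setminus\sigma_k$, whereas the paper sums over $\sigma\setminus\Cyc(\pi_k)$; the extra terms in the paper's sum have zero reduced cost, and your cycle-exclusion argument works identically in either case.
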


\begin{proof}
In the following, we show that for any $\pi\in\Pi_k$ we have
\[
\bfb^T\bfv^\pi\geq\bfb^T\bfv^{\pi_k}-(1-\gamma)n^2|\Delta_k|.
\]
The result then follows by taking the minimum over all $\pi\in\Pi_k$. Observe that
\begin{align*}
\bfb^T\bfv^\pi-\bfb^T\bfv^{\pi_k}\geq&\bfb^T\bfv^{(\sigma,\tau_k)}-\bfb^T\bfv^{\pi_k}\\
=&\left(\bfr^{\pi_k}_{(\sigma,\tau_k)}\right)^T\bfx^{(\sigma,\tau_k)}_+\\
=&\sum_{a\in\sigma\setminus\Cyc(\pi_k)}r^{\pi_k}_ax^{(\sigma,\tau_k)}_a\\
\geq& -\sum_{a\in \sigma\setminus \Cyc(\pi_k)}
x^{(\sigma,\tau_k)}_a|\Delta_k|\\
\geq&-(1-\gamma)n^2|\Delta_k|,
\end{align*}
where the first inequality follows from the optimality of $\tau$ against $\sigma$, the first equality follows from Proposition \ref{prop-basic}(v), and the second equality follows from $r^{\pi_k}_a=0$ for all $a\in\tau_k$ or $a\in\sigma\cap\Cyc(\pi_k)$, and the second inequalty follows from the definition of $\Delta_k$. To justify the last inequality, note that every action in $\sigma\setminus \Cyc(\pi_k)$ must be a path action under $(\sigma,\tau_k)$. Otherwise, such an action would lie on a cycle of $(\sigma,\tau_k)$; by the TBDFG property, this cycle cannot contain Player~2 actions, and hence it would also be a Player~1 cycle in $\pi=(\sigma,\tau)$. Since $\pi\in\Pi_k$, this cycle would be covered by cycles in $\pi_k$, contradicting $a\notin\Cyc(\pi_k)$. The last inequality then follows from Proposition~\ref{prop-basic}(iv), since there are at most $n$ such actions and each path action has flux at most $(1-\gamma)n$.
\end{proof}

We next show that a small improvement is still made when no Player 1's cycles are created for TBDFGs.

\begin{lemma}\label{lemma18}
Suppose the game is a TBDFG. Let $k^+\geq k$ be any iteration before creating a new Player 1's cycle (or algorithm termination). Then for any $k'\in\mathbb{N}\cap(k,k^+]$ the following inequality holds
\[
\bfb^T(\bfv^{\pi_{k'}}-\bfv^{\pi_{k^+}})\leq\left(1-\frac{1}{n^2}\right)\bfb^T(\bfv^{\pi_k}-\bfv^{\pi_{k^+}}).
\]
\end{lemma}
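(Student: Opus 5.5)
The plan is to reduce to the single step $k'=k+1$ and then compare the guaranteed one-step decrease with the upper bound on the remaining gap given by Lemma~\ref{lemma17}. By Lemma~\ref{lemma9}, the sequence $\bfb^T\bfv^{\pi_j}$ is nonincreasing. Hence for every $k'\in(k,k^+]$ we have $\bfb^T\bfv^{\pi_{k'}}\leq\bfb^T\bfv^{\pi_{k+1}}$, and it suffices to prove
\[
\bfb^T(\bfv^{\pi_{k+1}}-\bfv^{\pi_{k^+}})\leq\left(1-\frac{1}{n^2}\right)\bfb^T(\bfv^{\pi_k}-\bfv^{\pi_{k^+}}).
\]
If $\Delta_k\geq0$, the algorithm has terminated, all the values coincide, and there is nothing to prove. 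So we assume $\Delta_k<0$.

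\textbf{Step 1 (lower bound on the one-step decrease).} Let $a'$ be the action pivoted in at iteration $k+1$, so $r^{\pi_k}_{a'}=\Delta_k$. We repeat the chain of inequalities from the proof of Lemma~\ref{lemma9}, with $\pi_k,\pi_{k+1}$ in place of $\pi_0,\pi_1$:
\[
\bfb^T(\bfv^{\pi_k}-\bfv^{\pi_{k+1}})\geq -r^{\pi_k}_{a'}x^{\pi_{k+1}}_{a'}=|\Delta_k|\,x^{\pi_{k+1}}_{a'}.
\]
By Proposition~\ref{prop-basic}(iv), every action used in a profile has flux at least $1-\gamma$, whether it is a path action or a cycle action. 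Therefore
\[
\bfb^T(\bfv^{\pi_k}-\bfv^{\pi_{k+1}})\geq(1-\gamma)|\Delta_k|.
\]

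\textbf{Step 2 (upper bound on the gap).} We will show that $\pi_{k^+}\in\Pi_k$. First, $\tau_{k^+}=\pi^2(\sigma_{k^+})$ holds by Step 2 of Algorithm~\ref{alg}. Second, we need every Player~1 cycle of $\pi_{k^+}$ to be a cycle of $\pi_k$. This should follow because no iteration in $(k,k^+]$ creates a new Player~1 cycle, so induction over these iterations shows that each Player~1 cycle present at step $k^+$ was already present at step $k$.

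The TBDFG property is what makes this induction sound. Player~1 cycles contain no Player~2 actions, so Player~2's re-optimization in Step 2 cannot create or alter them. Thus any new Player~1 cycle could only arise from a Player~1 pivot, and such pivots are excluded in this range.

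Given $\pi_{k^+}\in\Pi_k$, Lemma~\ref{lemma17} yields
\[
\bfb^T\bfv^{\pi_{k^+}}\geq\tilde\nu^*_k\geq\bfb^T\bfv^{\pi_k}-(1-\gamma)n^2|\Delta_k|,
\]
that is,
\[
G:=\bfb^T(\bfv^{\pi_k}-\bfv^{\pi_{k^+}})\leq(1-\gamma)n^2|\Delta_k|.
\]

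\textbf{Step 3 (combine).} From Step 1 and Step 2 we get
\[
\bfb^T(\bfv^{\pi_k}-\bfv^{\pi_{k+1}})\geq(1-\gamma)|\Delta_k|\geq\frac{G}{n^2}.
\]
Subtracting this from $G$ gives $\bfb^T(\bfv^{\pi_{k+1}}-\bfv^{\pi_{k^+}})\leq(1-1/n^2)G$. By the monotonicity reduction above, this proves the claim for every $k'\in(k,k^+]$.

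The main obstacle is Step 2, the membership $\pi_{k^+}\in\Pi_k$. We have to make precise that ``no new Player~1 cycle is created in $(k,k^+]$'' implies that every Player~1 cycle of $\pi_{k^+}$ is a cycle of $\pi_k$. In particular, a cycle that is destroyed and later reassembled must be counted as newly created. The forward-progression property ensures that Player~2's counter-optimization in Step 2 of Algorithm~\ref{alg} cannot interfere with these cycles.
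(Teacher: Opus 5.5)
Your proof is correct and follows the paper's argument. You reduce to $k'=k+1$ by the monotonicity of Lemma~\ref{lemma9}, bound the one-step decrease below by $(1-\gamma)|\Delta_k|$, and bound the gap $\bfb^T(\bfv^{\pi_k}-\bfv^{\pi_{k^+}})$ above by $(1-\gamma)n^2|\Delta_k|$ using $\pi_{k^+}\in\Pi_k$ and Lemma~\ref{lemma17}. The only differences are minor: you observe that $x^{\pi_{k+1}}_{a'}\geq 1-\gamma$ holds whether $a'$ is a path or a cycle action (the paper instead argues $a'$ must be a path action), and you spell out why $\pi_{k^+}\in\Pi_k$, which the paper simply asserts.
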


\begin{proof}
If $k^+=k$, then $(k,k^+]$ contains no integer, and the proof is complete.

If $k^+>k$, let $\Delta_k=\min\{r^{\pi_k}_a:a\in\mcA^1\}$ be the modified cost computed at Step 1 of Algorithm \ref{alg} at the $(k+1)$-th iteration, and $a'$ be the action corresponding to this smallest modified cost, which is a path action on $\pi_{k+1}$ by the definition of $k^+$. Then, for any $k'\in(k,k^+]$ we have
\begin{equation}\label{l18-e1}
\bfb^T(\bfv^{\pi_k}-\bfv^{\pi_{k'}})\geq\bfb^T(\bfv^{\pi_k}-\bfv^{\pi_{k+1}})=-\left(\bfr^{\pi_k}_{\pi_{k+1}}\right)^T\bfx^{\pi_{k+1}}_+\geq-r^{\pi_k}_{a'}x^{\pi_{k+1}}_{a'}\geq(1-\gamma)|\Delta_k|,
\end{equation}
where the first inequality follows from Lemma \ref{lemma9}, the first equality follows from Proposition \ref{prop-basic}(v), the second equality follows from the definition of $a'$ and the updating scheme (see Step 1 of Algorithm \ref{alg}), and the last inequality follows from the definition of $\Delta_k$ and Proposition \ref{prop-basic}(iv). Then we have
\begin{align*}
\bfb^T(\bfv^{\pi_{k'}}-\bfv^{\pi_{k^+}})=&\bfb^T(\bfv^{\pi_k}-\bfv^{\pi_{k^+}})-\bfb^T(\bfv^{\pi_k}-\bfv^{\pi_{k'}})\\
\leq&\bfb^T(\bfv^{\pi_k}-\bfv^{\pi_{k^+}})-(1-\gamma)|\Delta_k|\\
\leq&\left(1-\frac{1}{n^2}\right)\bfb^T(\bfv^{\pi_k}-\bfv^{\pi_{k^+}}),
\end{align*}
where the first equality follows by rearranging the terms, the first inequality follows from \eqref{l18-e1}, and the second inequality follows from $\pi_{k^+}\in\Pi_k$ and Lemma \ref{lemma17}.
\end{proof}

Combining the previous two lemmas, we analyze the possible long-run behavior of the algorithm for solving TBDFGs.
\begin{lemma}\label{FG-lemma19}
Suppose the game is a TBDFG. Then after running Algorithm \ref{alg} for $O(n^2\log n)$ iterations from iteration $k$, one of the following must hold:
\begin{enumerate}[label=(\roman*)]
\item the algorithm terminates,
\item a new Player 1's cycle is created,
\item a Player 1's cycle is broken,
\item some action in $\pi_k$ will never appear until a new cycle is created.
\end{enumerate}
\end{lemma}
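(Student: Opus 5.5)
We argue by contradiction. Assume that for $L=\lceil c\,n^2\log n\rceil$ iterations after iteration $k$ none of (i)--(iii) occurs, and show that (iv) must then hold. Since no new Player 1 cycle is created and no Player 1 cycle is broken on $(k,k+L]$, every $\pi_{k'}$ with $k'\in(k,k+L]$ has exactly the same Player 1 cycles as $\pi_k$. The first consequence is that $\Pi_{k'}=\Pi_k$ throughout this window. So Lemma \ref{lemma17} applies with the same comparison set at every step, and Lemma \ref{lemma18} can be iterated with a common endpoint $k^+=k+L$.

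The first step is geometric contraction of the optimality gap inside the window. Applying Lemma \ref{lemma18} repeatedly, with $k$ replaced by $k,k+1,\dots$ and $k^+$ fixed, gives
\[
\bfb^T(\bfv^{\pi_{k+j}}-\bfv^{\pi_{k^+}})\leq\left(1-\frac{1}{n^2}\right)^j\bfb^T(\bfv^{\pi_k}-\bfv^{\pi_{k^+}}).
\]
Taking $j=O(n^2\log n)$ makes the right-hand side at most $n^{-3}\,\bfb^T(\bfv^{\pi_k}-\bfv^{\pi_{k^+}})$; the exponent is chosen to match the flux bounds below.

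The second step is to locate a bad action in $\pi_k$, following the Post--Ye argument. By Lemma \ref{lemma17} and $\pi_{k^+}\in\Pi_k$,
\[
\bfb^T(\bfv^{\pi_k}-\bfv^{\pi_{k^+}})\leq(1-\gamma)n^2|\Delta_k|.
\]
Lemma \ref{lemma18}'s proof gives the reverse-type bound $\bfb^T(\bfv^{\pi_k}-\bfv^{\pi_{k^+}})\geq(1-\gamma)|\Delta_k|$. Now write the gap of $\pi_k$ against the final profile $\pi_{k^+}$ through Proposition \ref{prop-basic}(v):
\[
\bfb^T(\bfv^{\pi_k}-\bfv^{\pi_{k^+}})=\sum_{a\in\pi_k}\bigl(-r^{\pi_{k^+}}_a\bigr)x^{\pi_k}_a.
\]
Here Player 2 actions contribute with the favourable sign. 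The cycle actions of Player 1 shared with $\pi_{k^+}$ have zero reduced cost. Hence some Player 1 path action $a\in\pi_k$ satisfies $r^{\pi_{k^+}}_a\geq \frac{1}{n}\cdot\frac{\bfb^T(\bfv^{\pi_k}-\bfv^{\pi_{k^+}})}{(1-\gamma)n}\geq |\Delta_k|/n^2$, since path flux is at most $(1-\gamma)n$.

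Then, for any later $k'\in(k+j,k^+]$, suppose $a$ appears in $\pi_{k'}$ as a path action; its flux is then at least $1-\gamma$. By Proposition \ref{prop-basic}(v) we obtain
\[
\bfb^T(\bfv^{\pi_{k'}}-\bfv^{\pi_{k^+}})\geq(1-\gamma)r^{\pi_{k^+}}_a-(\text{Player 2 terms}).
\]
Combining this with the contraction bound of the first step and the factor $n^{-3}$ would give a contradiction, so $a$ never reappears before a new cycle is created, which is (iv).

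The main obstacle is the Player 2 terms in this last expansion. For $a\in\tau_{k'}$ we have $r^{\pi_{k^+}}_a\leq 0$, which works against the lower bound for a minimizing player. I plan to handle this by comparing instead against $(\sigma_{k'},\tau_{k^+})$, which is dominated by $\pi_{k'}$ because $\tau_{k'}$ is the best response to $\sigma_{k'}$, exactly as in the first line of the proof of Lemma \ref{lemma17}. Then only Player 1 reduced costs under $\pi_{k^+}$ appear. These are not all nonnegative, since $\pi_{k^+}$ is not optimal, but they are bounded below by $\Delta_{k^+}\geq-\bfb^T(\bfv^{\pi_{k^+}}-\bfv^{\pi^*})$-type quantities. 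The $\log$ factor in $j$ must absorb these negative contributions, and the forward property, through the path/cycle classification as in Lemma \ref{lemma17}, keeps their fluxes at the path scale $(1-\gamma)n$.
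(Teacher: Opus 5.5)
\paragraph{Main gap: the final lower bound.}
Your first two steps are essentially fine. There is one sign slip: Proposition~\ref{prop-basic}(v) gives $\bfb^T(\bfv^{\pi_k}-\bfv^{\pi_{k^+}})=(\bfr^{\pi_{k^+}})^T\bfx^{\pi_k}$ without the minus sign, which is the sign your later reasoning actually uses. Using the no-broken-cycle hypothesis to make the Player~1 cycle terms vanish is a legitimate shortcut, as long as $k^+$ lies in the window.

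The genuine gap is the last step. Expanding $\bfb^T(\bfv^{\pi_{k'}}-\bfv^{\pi_{k^+}})$ as a flux-weighted sum of reduced costs under $\pi_{k^+}$ necessarily produces negative terms, and your repair does not close the argument. After passing to $(\sigma_{k'},\tau_{k^+})$, the negative Player~1 terms are controlled only through $|\Delta_{k^+}|$. Bounding $|\Delta_{k^+}|$ via $\bfb^T(\bfv^{\pi_{k^+}}-\bfv^{\pi^*})$ is on the wrong scale. Your contraction is relative to the window gap $g=\bfb^T(\bfv^{\pi_k}-\bfv^{\pi_{k^+}})$, whereas the distance to $\pi^*$ reflects all progress made after $k^+$ and bears no relation to $g$. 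So the claim that the logarithmic factor absorbs these terms is unsupported.

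The missing ingredient is the \emph{componentwise} monotonicity $\bfv^{\pi_{k'}}\geq\bfv^{\pi_{k^+}}$ from Lemma~\ref{lemma9}. It lets the paper avoid any flux expansion: if $a\in\mcA_s$ is used in $\pi_{k'}$, then
\[
\bfb^T(\bfv^{\pi_{k'}}-\bfv^{\pi_{k^+}})\geq(1-\gamma)\bigl(v^{\pi_{k'}}_s-v^{\pi_{k^+}}_s\bigr)=(1-\gamma)\bigl(c_a+\gamma\bfp_a^T\bfv^{\pi_{k'}}-v^{\pi_{k^+}}_s\bigr)\geq(1-\gamma)r^{\pi_{k^+}}_a\geq g/n^2 .
\]
This bound has no Player~2 terms and no negative Player~1 terms, and it contradicts the contraction after $O(n^2\log n)$ iterations.

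The same monotonicity would also rescue your route. It gives $\Delta_{k^+}\geq\Delta_{k'}-\gamma g'/(1-\gamma)$ and $|\Delta_{k'}|\leq g'/(1-\gamma)$, where $g'=\bfb^T(\bfv^{\pi_{k'}}-\bfv^{\pi_{k^+}})$. The negative terms are then $O(n^2g')$ and can be moved to the left-hand side. The single-state bound is simpler.

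\paragraph{Second, smaller gap: the choice $k^+=k+L$.}
With this choice you only show $a\notin\pi_{k'}$ for $k'\in(k+j,k+L]$. Outcome (iv), and its use in Lemma~\ref{FG-cycle}, require $a$ to stay out until the next new Player~1 cycle. For that, $k^+$ must be the last iteration before the next cycle creation (or termination), as in the paper and in Post--Ye.

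But then Player~1 cycles of $\pi_k$ may break after the window and before $k^+$, and their reduced costs under $\pi_{k^+}$ need not vanish. You therefore need the paper's second case:
\begin{itemize}
\item If no path action carries a $1/n$ share of $(\bfr^{\pi_{k^+}})^T\bfx^{\pi_k}$, then some Player~1 cycle $C$ does.
\item The first-entry flux bound then gives $n^2\sum_{a\in C}r^{\pi_{k^+}}_a\geq(\bfr^{\pi_{k^+}})^T\bfx^{\pi_k}$.
\item While $C$ is intact, componentwise monotonicity again gives $\bfb^T(\bfv^{\pi_{k'}}-\bfv^{\pi_{k^+}})\geq(1-\gamma)\sum_{s\in C}(v^{\pi_{k'}}_s-v^{\pi_{k^+}}_s)=\sum_{a\in C}r^{\pi_{k^+}}_a$.
\end{itemize}
Hence $C$ must break within $O(n^2\log n)$ iterations, which is outcome (iii).
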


\begin{proof}
Let $k^+\geq k$ be any iteration before creating a new Player 1's cycle (or algorithm termination or $k^+\geq k$), and let $k'$ be any round between $k$ and $k^+$. By Proposition \ref{prop-basic}(v), we have
\[
(\bfr^{\pi_{k^+}})^T\bfx^{\pi_k}=\bfb^T(\bfv^{\pi_k}-\bfv^{\pi_{k^+}}).
\]

We divide the analysis into two cases. First suppose that there exists an action $a$ used in state $s$ on a path in $\pi_k$ such that $r^{\pi_{k^+}}_ax^{\pi_k}_a\geq(\bfr^{\pi_{k^+}})^T\bfx^{\pi_k}/n$ (note that $(\bfr^{\pi_{k^+}})^T\bfx^{\pi_k}\geq0$). Since $a$ is on a path $x^{\pi_k}_a\leq(1-\gamma)n$, which implies that $(1-\gamma)r^{\pi_{k^+}}_an^2\geq(\bfr^{\pi_{k^+}})^T\bfx^{\pi_k}$. Now if strategy $\pi_{k'}$ uses action $a$, then
\begin{align*}
\bfb^T(\bfv^{\pi_{k'}}-\bfv^{\pi_{k^+}})\geq& (1-\gamma)\left(v^{\pi_{k'}}_s-v^{\pi_{k^+}}_s\right)\\
=&(1-\gamma)\left((c_a+\gamma \bfp_a^T\bfv^{\pi_{k'}})-v^{\pi_{k^+}}_s\right)\\
\geq& (1-\gamma)\left((c_a+\gamma \bfp_a^T\bfv^{\pi_{k^+}})-v^{\pi_{k^+}}_s\right)\\
=&(1-\gamma)r^{\pi_{k^+}}_a\geq(\bfr^{\pi_{k^+}})^T\bfx^{\pi_k}/n^2,
\end{align*}
where the first equality follows from Proposition \ref{prop-basic}(i), the first and second inequalities follow from Lemma \ref{lemma9}, the second equality follows from \eqref{opt}, and the last inequality follows from $(1-\gamma)r^{\pi_{k^+}}_an^2\geq(\bfr^{\pi_{k^+}})^T\bfx^{\pi_k}$.

In the second case, there is no action $a$ on a path in $\pi_k$ satisfying $r^{\pi_{k^+}}_ax^{\pi_k}_a\geq(\bfr^{\pi_{k^+}})^T\bfx^{\pi_k}/n$. The remaining portion of $(\bfr^{\pi_{k^+}})^T\bfx^{\pi_k}$ is due to cycles, so there must be some cycle $C$ consisting of actions $\{a_1,\dots,a_t\}$ and corresponding states $\{s_1,\dots,s_t\}$ such that $\sum_{a\in C}r^{\pi_{k^+}}_ax^{\pi_k}_a\geq(\bfr^{\pi_{k^+}})^T\bfx^{\pi_k}/n$. Note that $(\bfr^{\pi_{k^+}})^T\bfx^{\pi_k}\geq0$ and it follows from the optimality of the Player 2's strategy in $k^+$ that $r^{\pi_{k^+}}_a\leq0$ for all $a\in\mcA^2$. These together implies that $C$ can only be Player 1's cycle. 

All flux in $C$ first enters $C$ either from a path ending at $C$ or from the initial unit of flux placed on some state $s$ in $C$. If $y_s\geq1-\gamma$ units of (scaled) flux first enter $C$ at state $s$ in strategy $\pi_k$, then that flux incurs $y_s(v^{\pi_k}_s-v^{\pi_{k^+}}_s)$ cost with respect to $\bfr^{\pi_{k^+}}$, so $\sum_{a\in C}r^{\pi_{k^+}}_ax^{\pi_k}_a=\sum_{s\in C}y_s(v^{\pi_k}_s-v^{\pi_{k^+}}_s)$. Moreover, each term $v^{\pi_k}_s-v^{\pi_{k^+}}_s$ is nonnegative (see Lemma \ref{lemma9}). Under the scaled flux convention $\bfb=(1-\gamma)\bfe$, each of the $n$ starting states contributes at most $(1-\gamma)$ units of first-entry mass to any fixed state of the cycle, because the deterministic trajectory can first enter the cycle only once, we have $y_s\leq n(1-\gamma)$. Also note that $\sum_{s\in C}(v^{\pi_k}_s-v^{\pi_{k^+}}_s)=\sum_{a\in C}r^{\pi_{k^+}}_a/(1-\gamma)$. Therefore, $n\sum_{a\in C}r^{\pi_{k^+}}_a\geq\sum_{a\in C}r^{\pi_{k^+}}_ax^{\pi_k}_a$ implying $n^2\sum_{a\in C}r^{\pi_{k^+}}_a\geq(\bfr^{\pi_{k^+}})^T\bfx^{\pi_k}$.

As long as cycle $C$ is intact, each $a\in C$ has $1$ (scaled) flux from states in $C$ (see Proposition \ref{prop-basic}(iv)), so if $C$ is in strategy $\pi_{k'}$ then
\begin{align*}
\bfb^T(\bfv^{\pi_{k'}}-\bfv^{\pi_{k^+}})\geq(1-\gamma)\sum_{s\in C}\left(v^{\pi_{k'}}_s-v^{\pi_{k^+}}_s\right)=\sum_{a\in C}r^{\pi_{k^+}}_a\geq(\bfr^{\pi_{k^+}})^T\bfx^{\pi_k}/n^2.
\end{align*}

Now if $\log_{n^2/(n^2-1)}n^2$ iterations occur between $\pi_k$ and $\pi_{k'}$, Lemma \ref{lemma18} implies that
\[
\bfb^T(\bfv^{\pi_{k'}}-\bfv^{\pi_{k^+}})<\bfb^T(\bfv^{\pi_k}-\bfv^{\pi_{k^+}})/n^2=(\bfr^{\pi_{k^+}})^T\bfx^{\pi_k}/n^2,
\]
where the equality follows from Proposition \ref{prop-basic}(v). In the first case action $a$ cannot appear in $\pi_{k'}$, and in the second case Player 1's cycle $C$ must be broken in $\pi_{k'}$. This takes $\log_{n^2/(n^2-1)}n^2=O(n^2\log n)$ iterations if no new cycles interrupt the process.
\end{proof}

\subsection{Strong polynomial complexity}
In this subsection, we prove our main result that Algorithm \ref{alg} is strongly polynomial for TBDFGs. Using the previous lemma, we show that the algorithm cannot continue making only slight improvements indefinitely without either creating a new Player 1' cycle or terminating.

\begin{lemma}\label{FG-cycle}
Suppose the game is a TBDFG. Either the algorithm terminates or a new Player 1's cycle is created after $O(n^2m\log n)$ iterations of Algorithm \ref{alg}.
\end{lemma}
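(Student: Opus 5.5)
The plan mirrors the counting argument of \cite{post2015simplex} for deterministic MDPs, with Lemma~\ref{FG-lemma19} as the basic engine. Fix an iteration $k_0$ and suppose that no new Player~1 cycle is created and the algorithm does not terminate during the next $T$ iterations. We will show that $T=O(n^2m\log n)$. Throughout this window only events (iii) and (iv) of Lemma~\ref{FG-lemma19} can occur. Hence every block of $L=O(n^2\log n)$ consecutive iterations inside the window must contain either the breaking of a Player~1 cycle or the permanent elimination of some action. By ``permanent'' we mean an action of the strategy profile at the start of the block that never reappears until a new cycle is created, that is, for the rest of the window.

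The key step is to bound the number of such events inside the window by $O(m)$.

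\textbf{Eliminations.} Each elimination retires a distinct action for the remainder of the window, since a retired action can never reappear. Therefore there are at most $m$ eliminations.

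\textbf{Cycle breakings.} Within the window no new Player~1 cycle is created, so the set of Player~1 cycles can only shrink. One subtlety must be checked: could a cycle that has been broken reappear? Re-forming a previously broken Player~1 cycle counts as creating a Player~1 cycle relative to the current profile. So we interpret ``new'' relative to the current $\pi_k$, consistent with the definition of $\Pi_k$. Under this reading a broken cycle cannot return inside the window.

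By the TBDFG property, Player~2's reoptimization cannot create or destroy Player~1 cycles. Moreover, each Player~1 cycle present at $k_0$ is a disjoint set of states, so at most $n$ cycles exist at $k_0$. Hence there are at most $n\le m$ breakings.

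Together this gives at most $m+n=O(m)$ events. Applying Lemma~\ref{FG-lemma19} repeatedly, restarting at the iteration right after each event, yields $T\le (m+n)\cdot L=O(n^2m\log n)$.

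The main obstacle is making the restart legitimate. Lemma~\ref{FG-lemma19} guarantees that the eliminated action ``never appears until a new cycle is created.'' Its proof uses $k^+$, the last iteration before a new cycle (or termination). So when it is reapplied from a later iteration $k_1$, the relevant $k^+$ is the same end of the window, and the eliminated action stays excluded up to the same $k^+$. This is what makes the eliminations count as distinct across restarts, and I would verify it directly from that proof.

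I would also check that case (iii) of the lemma really corresponds to a cycle present at the starting iteration $k$ of the current block being broken, rather than some other cycle. In the proof, the cycle $C$ lies in $\pi_k$, so each breaking removes one of the finitely many cycles alive in the window. If the bookkeeping for cycle breakings turns out to be delicate, the fallback is to charge each breaking to one of the cycle's actions, which then leaves the profile. This charges each action at most once per window, so the total remains $O(m)$.
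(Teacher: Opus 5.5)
Your proposal is correct and is essentially the paper's argument. You split the cycle-free window into blocks of $O(n^2\log n)$ iterations and use Lemma~\ref{FG-lemma19} to charge each block either to the breaking of one of the at most $n$ Player~1 cycles present at the start, or to the permanent elimination of one of the $m$ actions, which gives $(m+n)\,O(n^2\log n)$ iterations. Your extra checks, namely reading ``new'' relative to the current profile so that $\pi_{k^+}\in\Pi_k$ and fixing $k^+$ at the end of the window so that eliminations are distinct across restarts, only make explicit what the paper's terse proof leaves implicit.
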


\begin{proof}
Let $\pi_{k_0}$ be a strategy after a new Player 1's cycle is created, and consider the policies $\pi_{k_1},\pi_{k_2},\dots$, each separated by $O(n^2\log n)$ iterations. If no Player 1's cycle is created, then by Lemma \ref{FG-lemma19}, each of these policies $\pi_{k_i}$ has either broken another Player 1's cycle in $\pi_{k_0}$ or contains an action that cannot appear in $\pi_{k_j}$ for all $j>i$. There are at most $n$ Player 1's cycles in $\pi_{k_0}$ and at most $m$ actions that can be eliminated. Therefore, after $(m+n)O(n^2\log n)$ iterations, the algorithm must terminate or create a new cycle.
\end{proof}

Using the significant improvement achieved when a new cycle is created with Player 1's pivoted in action, we establish the following lemma.

\begin{lemma}\label{FG-elim}
Suppose the game is a TBDFG. Let $\pi$ be a strategy profile at some iteration of Algorithm \ref{alg}. Starting from $\pi$, after $O(n\log n)$ Player 1's cycle-creating rounds, some action in $\pi\cap\mcA^1$ is either eliminated from cycles for the remainder of the algorithm or entirely eliminated from policies for the remainder of the algorithm.
\end{lemma}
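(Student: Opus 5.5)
The plan is to follow the cycle-elimination argument of \cite{post2015simplex} for deterministic MDPs, with Lemma~\ref{lem:sig} supplying the geometric decrease. Let $\pi=\pi_k$ and set $\Gamma_j:=\bfb^T(\bfv^{\pi_j}-\bfv^{\pi^*})\ge 0$.

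\emph{Geometric decrease.} By Lemma~\ref{lem:sig}, every Player~1 cycle-creating round shrinks the gap $\Gamma$ by a factor of at least $1-\frac1n$. By Lemma~\ref{lemma9}, the gap is nonincreasing in all other rounds. Therefore, after $L=\lceil n\log_{n/(n-1)}(n^2)\rceil$-type many cycle-creating rounds, where $L=O(n\log n)$ once the factor is made precise, we have $\Gamma_{k'}<\Gamma_k/n^2$ at every later iteration $k'$. The exact power of $n$ is fixed by the next step.

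\emph{Locating a heavy action.} We write
\[
\Gamma_k=(\bfr^{\pi^*})^T\bfx^{\pi_k}=\sum_{a\in\pi_k}r^{\pi^*}_a x^{\pi_k}_a
\]
by Proposition~\ref{prop-basic}(v). The Player~2 terms are nonpositive, since $r^{\pi^*}_a\le 0$ on $\mcA^2$ by \eqref{opt}, and the Player~1 terms are nonnegative. Hence the Player~1 part is at least $\Gamma_k$. Among the at most $n$ Player~1 actions of $\pi$, some $a\in\pi\cap\mcA^1$, belonging to state $s$, has $r^{\pi^*}_a x^{\pi_k}_a\ge \Gamma_k/n$. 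Since $x^{\pi_k}_a\le n$ by Proposition~\ref{prop-basic}(iv), this gives $r^{\pi^*}_a\ge \Gamma_k/n^2$. If $a$ is only a path action of $\pi$, we instead get the sharper bound $(1-\gamma)r^{\pi^*}_a\ge \Gamma_k/n^2$.

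\emph{Contradiction if $a$ stays.} Suppose a later profile $\pi_{k'}$ still contains $a$. Arguing as in the first case of Lemma~\ref{FG-lemma19}, we obtain
\[
\Gamma_{k'}=\sum_{a''\in\pi_{k'}} r^{\pi^*}_{a''}x^{\pi_{k'}}_{a''}.
\]
The Player~2 terms here are problematic because they are $\le0$. I would handle them by comparing with $(\sigma_{k'},\pi^2_*)$ rather than $\pi_{k'}$: since $\tau_{k'}$ is a best response, $\Gamma_{k'}\ge\bfb^T(\bfv^{(\sigma_{k'},\pi^2_*)}-\bfv^{\pi^*})$. In that profile every term is nonnegative, so $\Gamma_{k'}\ge r^{\pi^*}_a x_a$ with $x_a$ the flux of $a$ in $(\sigma_{k'},\pi^2_*)$. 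The forward property is what makes this work. If $a$ lies on a Player~1 cycle of $\pi_{k'}$, that cycle contains no Player~2 action, so it persists when $\tau_{k'}$ is replaced by $\pi^2_*$. Its flux is then at least $1$, which yields $\Gamma_{k'}\ge \Gamma_k/n^2$, a contradiction. So after $O(n\log n)$ cycle-creating rounds, $a$ never again appears on a cycle. If $a$ was a path action of $\pi$, its flux is at least $1-\gamma$ in any profile containing it. The sharper bound then gives $\Gamma_{k'}\ge (1-\gamma)r^{\pi^*}_a\ge\Gamma_k/n^2$, again a contradiction, so $a$ is eliminated entirely.

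\emph{Main obstacle.} The delicate step is the mixed case, where $a$ was a cycle action in $\pi$ but reappears in $\pi_{k'}$ as a path action. Here only the weaker bound $r^{\pi^*}_a\ge\Gamma_k/n^2$ holds, and the flux can be as small as $1-\gamma$. This is exactly why the lemma allows the alternative ``eliminated from cycles'' rather than claiming full elimination. I therefore expect the argument to close with the dichotomy stated in the lemma. The care needed is in justifying the swap to $\pi^2_*$ via best-response optimality and in confirming, via the TBDFG property, that Player~1 cycles are unaffected by that swap.
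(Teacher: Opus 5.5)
Your proposal is correct and follows essentially the same route as the paper. You pick a heavy Player~1 action with respect to $\bfr^{\pi^*}$. You then compare any later $\pi_{k'}$ containing it with $(\sigma_{k'},\pi^2_*)$, using best-response optimality of $\tau_{k'}$, and apply the flux lower bound $1$ for a cycle or $1-\gamma$ for a path. The cycle's survival under that swap is exactly where the forward property is needed; you make this explicit, whereas the paper only says ``similarly''. You conclude with the geometric decay from Lemma~\ref{lem:sig} and Lemma~\ref{lemma9}.

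The only slip is the stray factor $n$ in your $L$. What is needed is any $L>\log_{n/(n-1)}n^2$, which is $O(n\log n)$, exactly as in the paper.
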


\begin{proof}
Consider a strategy profile $\pi=(\sigma,\tau)$ where $\tau=\pi^2(\sigma)$ with respect to the optimal gains $\bfr^{\pi^*} $ where $\pi^*=(\pi_*^1,\pi_*^2)$ (see Proposition \ref{prop-basic}(vi)).

There is an action such that $r^{\pi^*}_ax^{\pi}_a\geq(\bfr^{\pi^*})^T\bfx^\pi/n=\bfb^T(\bfv^\pi-\bfv^{\pi^*})/n\geq0$. Notice that $\bfr^{\pi^*}$ is the gain vector for the optimal strategy, $r^{\pi^*}_{a'}\geq0$ for all $a'\in\mcA^1$ and $r^{\pi^*}_{a'}\leq0$ for all $a'\in\mcA^2$, thus we have $a\in\mcA^1$. If $a$ is on a path in $\pi$, then $1-\gamma\leq x^\pi_a\leq(1-\gamma)n$, so $(1-\gamma)r^{\pi^*}_a\geq(\bfr^{\pi^*})^T\bfx^\pi/n^2$, and if $a$ is on a cycle in $\pi$, then $1\leq x^\pi_a\leq n$, so $r^{\pi^*}_a\geq(\bfr^{\pi^*})^T\bfx^\pi/n^2$.

If $\pi'=(\sigma',\tau')$ where $\tau'=\pi^2(\sigma')$ is any strategy containing $a\in\mcA^1$ and $a$ is on a path in $\pi$, then we have
\begin{align*}
&\bfb^T(\bfv^{\pi'}-\bfv^{\pi^*})\geq\bfb^T\left(\bfv^{(\sigma',\pi^2_*)}-\bfv^{\pi^*}\right)\geq(\bfr^{\pi^*})^T\bfx^{(\sigma',\pi^2_*)}\\
&=\sum_{a'\in\sigma'}r^{\pi^*}_{a'}x^{(\sigma',\pi^2_*)}_{a'}\geq r^{\pi^*}_ax^{(\sigma',\pi^2_*)}_a\geq (1-\gamma)r^{\pi^*}_a\geq(\bfr^{\pi^*})^T\bfx^\pi/n^2,
\end{align*}
where the first inequality follows from the optimality of $\tau'$, the second inequality follows from Proposition \ref{prop-basic}(v), the first equality and the third inequality follow from Lemmas \ref{l-opt} and Proposition \ref{prop-basic}(iv), the second last inequality follows from $x^\pi_a\geq(1-\gamma)$ and the forward progression property of TBDFG, and the last inequality follows from $(1-\gamma)r^{\pi^*}_a\geq(\bfr^{\pi^*})^T\bfx^\pi/n^2$. Similarly, if $\pi'$ contains $a\in\mcA^1$ on a cycle and $a$ is on a cycle in $\pi$, we have $\bfb^T(\bfv^{\pi'}-\bfv^{\pi^*})\geq(\bfr^{\pi^*})^T\bfx^\pi/n^2$

Now by Lemmas \ref{lemma9} and \ref{lem:sig}, and there are more than $\log_{n/(n-1)}n^2=O(n\log n)$ new Player 1's cycles created between policies $\pi$ and $\pi'$ then
\[
\bfb^T(\bfv^{\pi'}-\bfv^{\pi^*})<\left(1-\frac{1}{n}\right)^{\log_{n/(n-1)}n^2}\bfb^T(\bfv^\pi-\bfv^{\pi^*})\leq(\bfr^{\pi^*})^T\bfx^\pi/n^2,
\]
where the first inequality follows from Lemmas \ref{lemma9} and \ref{lem:sig}, and the equality follows from Proposition \ref{prop-basic}(v). Therefore, if $\pi$ contained $a$ on a path, then $a$ cannot appear in any strategy after $\pi'$ for the remainder of the algorithm. If $\pi$ contained $a$ on a cycle, then $a$ cannot appear in a cycle after $\pi'$, although it may still appear on a path later in the algorithm.
%
%
%
\end{proof}

We close the section with the complexity bound for Algorithm \ref{alg} and show that TBDFG is strongly polynomial-time solvable.

\begin{thm}\label{thm}
The simple strategy-iteration method in Algorithm \ref{alg} converges in at most $O(n^6m^4\log^4 n)$ pivot steps of simplex method on TBDFGs with uniform discount $\gamma\in(0,1)$. Therefore, the simple strategy-iteration method is strongly polynomial for the TBDFG.

\end{thm}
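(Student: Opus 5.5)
The bound will come from multiplying three counts that the earlier lemmas already supply, following the Post--Ye counting scheme for deterministic MDPs.

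\textbf{Step 1: cycle-creating rounds.} By Lemma~\ref{FG-elim}, every block of $O(n\log n)$ Player~1 cycle-creating rounds produces one irreversible event for some action $a\in\mcA^1$. Either $a$ is removed from cycles for the rest of the run, or $a$ is removed from strategies altogether.

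Each action of $\mcA^1$ can undergo at most two such events: first leaving cycles, then leaving strategies entirely. So there are at most $2m_1\le 2m$ events. After all of them, no Player~1 cycle can be created, and the next block forces termination.

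Because each block produces a fresh event, the total number of cycle-creating rounds is $O(mn\log n)$. The point to check is that the action singled out by Lemma~\ref{FG-elim} has not already undergone the same event. If it had, it could not lie in $\pi$ in the required position, so this follows from the lemma's conclusion.

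\textbf{Step 2: iterations between creations.} By Lemma~\ref{FG-cycle}, between two consecutive cycle-creating rounds (or until termination) there are $O(n^2m\log n)$ iterations of Algorithm~\ref{alg}. Multiplying by Step~1 gives $O(n^3m^2\log^2 n)$ outer iterations.

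\textbf{Step 3: pivots per iteration.} Each iteration makes one Player~1 pivot in Step~1. It then calls the simplex method for Player~2's deterministic-MDP counter-strategy in Step~2, which by Proposition~\ref{thm-MDP} costs $O(n^3m^2\log^2 n)$ pivots. Warm-starting from $\tau_{k-1}$ does not hurt, since that bound holds from any initial basis.

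Multiplying, the total is $O(n^3m^2\log^2 n)\cdot O(n^3m^2\log^2 n)=O(n^6m^4\log^4 n)$ pivot steps. None of the bounds depend on $\gamma$ or on the bit lengths of $\bfc$, and each pivot is a strongly polynomial arithmetic operation. Hence the method is strongly polynomial.

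\textbf{Correctness at termination.} When the algorithm stops, $\bfv^{\pi_k}=\bfv^{\pi_{k-1}}$. By Lemma~\ref{lemma9} this forces $\Delta_{k-1}\ge 0$, so $\bfr^1\ge\bfzero$. Lemma~\ref{l-opt} gives $\bfr^2\le\bfzero$, and complementary slackness holds on $\pi_k$. Together these certify optimality through \eqref{opt}.

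\textbf{Main obstacle.} The delicate point is Step~1's bookkeeping. Lemma~\ref{FG-elim} only eliminates an action "from cycles" in the cycle case, so each action must be charged at most twice and never re-charged. A path-case elimination is permanent. A cycle-case elimination permanently forbids the action from cycles, so the same action can later be charged at most once more, via the path case. Getting this right gives the $2m$ factor and rules out double counting.
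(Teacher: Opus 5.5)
Your proposal is correct and follows the paper's proof essentially step for step. It uses the same three counts: at most $2m$ elimination events, each absorbing $O(n\log n)$ cycle-creating rounds by Lemma~\ref{FG-elim}; $O(n^2m\log n)$ iterations between creations by Lemma~\ref{FG-cycle}; and $O(n^3m^2\log^2 n)$ pivots per iteration by Proposition~\ref{thm-MDP}. Your explicit checks — that each block charges a fresh event, and that termination certifies optimality via \eqref{opt} — are details the paper leaves implicit, and they do not change the argument.
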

\begin{proof}
Consider the policies $\pi_{k_0},\pi_{k_1},\dots$ such that $O(n\log n)$ new Player 1's cycles are created between $\pi_{k_i}$ and $\pi_{k_{i+1}}$. By Lemma \ref{FG-elim}, each $\pi_{k_i}$ contains an action that is either eliminated entirely from $\pi_{k_j}$ for $j>i$ or eliminated from cycles. Each action can be eliminated from both cycles and paths, so after $2m$ such rounds of $O(n\log n)$ new Player 1's cycles, the algorithm has converged. By Proposition \ref{thm-MDP} and \ref{FG-cycle}, each outer iteration takes $O(n^3m^2\log^2 n)$ pivot steps of simplex method, and Player 1's cycles are created every $O(n^2m\log n)$ iterations. Thus the algorithm terminates in $O(n^6m^4\log^4 n)$ total pivot steps of simplex method. 


\end{proof}

\section{A structure-aware SCC algorithm for TBDFGs}\label{sec:specialized}
The previous section established the main result for Algorithm \ref{alg}: although the strategy-iteration method applies to general TBSGs, it follows a strongly polynomial trajectory whenever the instance is a TBDFG, without requiring prior knowledge of this structure. We now turn to the complementary, structure-aware setting in which the TBDFG property is known or certified in advance and can be exploited explicitly. The absence of directed cycles involving states controlled by both players allows the game graph to be decomposed into one-player recurrent components arranged in an acyclic dependency order. Although this approach is less general than simple strategy iteration, the resulting decomposition yields a specialized graph-based algorithm with improved complexity. Thus, structural knowledge is unnecessary for strong polynomiality, but it enables a more efficient algorithm.

For clarity of our discussion, we introduce the following graph-theoretic terminology.

\begin{defi}[{\bf Strongly connected directed graph}]\label{def:scc}
A graph is called strongly connected if there is a path in each direction between each pair of vertices of the graph. Additionally, a strongly connected component (SCC) of a directed graph is a maximal strongly connected subset of vertices.
\end{defi}

\begin{defi}[{\bf Condensation graph}]
\label{def:cg}
Let $\mcG=(\mcS,\mcA)$ be the directed graph of a TBDFG, and let $\mcC_1,\ldots,\mcC_K$ be its strongly connected components. The \emph{condensation graph} of $G$ is the directed graph $\mcG^{\rm SCC}=(\mcN,\mcE)$, where each node in $\mcN=\{\mcC_1,\ldots,\mcC_K\}$ represents one strongly connected component of $G$. There is a directed edge $(\mcC_i,\mcC_j)\in \mcE$ with $i\neq j$ if and only if there exist states $s\in\mcC_i$ and $s'\in\mcC_j$ such that $(s,s')=a\in\mcA$.
\end{defi}

The key observation is that the condensation graph of a TBDFG is acyclic, and each of its nodes corresponds to a strongly connected component controlled by a single player. Thus, at a high level, a specialized algorithm can process these components in reverse topological order. When a component is processed, the values of all successor components have already been determined. The component therefore reduces to a deterministic MDP with fixed boundary values on its outgoing actions. Since deterministic MDPs admit strongly polynomial algorithms, this componentwise decomposition provides the structural foundation for a strongly polynomial algorithm tailored to TBDFGs.

We first prove our observed graph-theoretic facts that justify this backward propagation procedure in the following lemmas. They are direct consequences of the forward-progression property of TBDFGs.

\begin{lemma}\label{lem:one_cmpt_one_player}
Every strongly connected component of the graph of a TBDFG contains states controlled by at most one player.
\end{lemma}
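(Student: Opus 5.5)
The argument is by contradiction, directly from Definition~\ref{tbdfg} and Definition~\ref{def:scc}. Suppose some SCC $\mcC$ of the game graph contains a state $s\in\mcS^1$ and a state $s'\in\mcS^2$. Strong connectivity gives a directed path $P_1$ from $s$ to $s'$ and a directed path $P_2$ from $s'$ back to $s$, all inside $\mcC$. The first edge of $P_1$ leaves $s$, so it is an action in $\mcA_s\subseteq\mcA^1$. The first edge of $P_2$ leaves $s'$, so it lies in $\mcA^2$. The concatenation $P_1P_2$ is therefore a closed directed walk using actions of both players. The remaining task is to extract from it a genuine directed cycle that still uses both players' actions, which contradicts the forward-progression property.

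This extraction is the only nontrivial step, because a closed walk may repeat vertices, and Definition~\ref{tbdfg} speaks of directed cycles. I would handle it with a minimality argument. Among all closed directed walks in the graph that contain at least one action from $\mcA^1$ and at least one from $\mcA^2$, pick one, $W$, of minimum length; one exists by the construction above. Suppose $W$ repeats a vertex $u$ at two non-terminal positions. Then $W$ splits at $u$ into two shorter closed walks $W_1$ and $W_2$ whose edge multisets together make up that of $W$.

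Every action of $W$ therefore lies in $W_1$ or $W_2$. If one of them contained actions of both players, it would contradict the minimality of $W$. Otherwise one of them, say $W_1$, uses only $\mcA^1$ actions and the other uses only $\mcA^2$ actions. Each edge leaving a vertex belongs to that vertex's owner, so every vertex of $W_1$ is in $\mcS^1$ (each such vertex has an outgoing edge in $W_1$), and likewise every vertex of $W_2$ is in $\mcS^2$. But $u$ lies on both $W_1$ and $W_2$, which is impossible since $\mcS^1\cap\mcS^2=\emptyset$.

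Hence $W$ has no repeated vertex other than its endpoints, so it is a directed cycle in the sense of the paper's definition (a nonempty trail with only the first and last vertices equal). It contains actions of both players, contradicting Definition~\ref{tbdfg}. Therefore every SCC lies entirely within $\mcS^1$ or entirely within $\mcS^2$. Singleton SCCs, including those without self-loops, are covered trivially.
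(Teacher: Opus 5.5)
Your proof is correct, and it differs from the paper's only in how a genuine mixed cycle is extracted from strong connectivity.

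\textbf{The paper's route.} It walks along a path from $s_1\in\mcS^1$ to $s_2\in\mcS^2$ to locate one edge $(u,v)$ whose endpoints have different owners. It then closes this edge with a return path from $v$ to $u$, shortcut to be simple. Since $u$ and $v$ both survive the shortcutting, the resulting cycle automatically visits both players' states, so no minimality argument is needed.

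\textbf{Your route.} You keep the whole closed walk $P_1P_2$ and pass to a shortest closed walk using actions of both players. The key observation is that every vertex of a closed walk is the tail of one of its edges, and hence has the same owner as that edge. Consequently the split vertex $u$ cannot lie on two single-player pieces belonging to opposite players.

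\textbf{Comparison.} The paper's argument is shorter. Yours proves the slightly more general, reusable fact that any closed walk using both players' actions contains a directed cycle using both players' actions. It also makes explicit the passage between ``states of both players'' and ``actions of both players''. The paper's proof leaves that step implicit: it ends with a cycle containing states of both players, while the TBDFG definition is phrased in terms of actions.

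\textbf{One small wording point.} To conclude that only the first and last vertices of $W$ coincide, you must also exclude the base vertex recurring at an interior position. That is not literally ``two non-terminal positions'', but the identical split at that occurrence handles it.
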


\begin{proof}
Suppose for contradiction that there exists a strongly connected component $\mcC$ containing states controlled by both players. Then there exist states$s_1 \in \mcS^1$ and $s_2 \in \mcS^2$ such that $s_1,s_2\in \mcC$.

Since $\mcC$ is strongly connected, there is a directed path from $s_1$ to $s_2$. Along this path, the controller of the states must change at least once. Hence, there exist two consecutive states $u$ and $v$ on the path such that $u$ and $v$ are controlled by different players. In particular, the directed edge from $u$ to $v$ belongs to $\mcC$. Again using the strong connectivity of $\mcC$, there exists a directed path from $v$ back to $u$. Deleting repeated states if necessary, we may assume that this path is without any duplicate states. Concatenating the edge $(u,v)$ with this path yields a directed cycle that contains states controlled by both players, which contradicts the defining property of a TBDFG in Definition~\ref{tbdfg}.

Therefore, no strongly connected component can contain states controlled by both players, and the conclusion follows.
\end{proof}

\begin{lemma}\label{lem:one_side_path}
Let $\mcC_1$ and $\mcC_2$ be two strongly connected components whose states are controlled by Players~1 and~2, respectively. Then paths from $\mcC_1$ to $\mcC_2$ and from $\mcC_2$ to $\mcC_1$ cannot both exist.
\end{lemma}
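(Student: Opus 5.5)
We argue by contradiction, mirroring the proof of Lemma~\ref{lem:one_cmpt_one_player}. Suppose there is a directed path $P$ from some state $s_1\in\mcC_1$ to some state $s_2\in\mcC_2$, and a directed path $Q$ from some state $t_2\in\mcC_2$ to some state $t_1\in\mcC_1$. Since $\mcC_1$ is strongly connected, it contains a path from $t_1$ to $s_1$; likewise $\mcC_2$ contains a path from $s_2$ to $t_2$. Concatenating $P$, the path inside $\mcC_2$ from $s_2$ to $t_2$, $Q$, and the path inside $\mcC_1$ from $t_1$ back to $s_1$ gives a closed directed walk $W$ that passes through $s_1\in\mcS^1$ and $s_2\in\mcS^2$. (By Lemma~\ref{lem:one_cmpt_one_player}, the hypothesis that $\mcC_1$ and $\mcC_2$ are controlled by different players forces $\mcC_1\neq\mcC_2$, but we do not even need this.)

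The next step is to convert the closed walk into a directed cycle in the sense of the paper's definition that still contains actions of both players. We will use the same trick as in Lemma~\ref{lem:one_cmpt_one_player}. Every state on $W$ lies on a closed walk, so all states of $W$ belong to one SCC of the game graph; in particular $s_1$ and $s_2$ lie in a common SCC. Along the subwalk of $W$ from $s_1$ to $s_2$ the controller changes, so there is an edge $(u,v)$ on $W$ with $u$ and $v$ controlled by different players. Since $W$ is closed, it also contains a walk from $v$ back to $u$, and we shorten it to a simple path by deleting repeated states. Appending the edge $(u,v)$ to this path yields a directed cycle through $u$ and $v$. Its outgoing actions at $u$ and at $v$ belong to different players, which contradicts Definition~\ref{tbdfg}.

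Alternatively, and even more directly, the existence of both paths implies that $\mcC_1\cup\mcC_2$ lies within a single strongly connected set. By maximality of SCCs, this gives $\mcC_1=\mcC_2$, and that component would then contain states of both players, contradicting Lemma~\ref{lem:one_cmpt_one_player}. I would present this shorter route, and keep the cycle extraction only as the justification already done in Lemma~\ref{lem:one_cmpt_one_player}.

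The only real subtlety is the passage from a closed walk to a simple directed cycle that still involves both players. Naive loop-erasure might discard all the mixed-player edges. That is why the plan fixes one mixed edge $(u,v)$ first and shortens only the return path from $v$ to $u$; the resulting cycle is then guaranteed to contain that mixed edge.
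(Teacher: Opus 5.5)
Your proposal is correct, and the shorter route you say you would present is essentially the paper's proof: the paper also closes the two paths using the strong connectivity of $\mcC_1$ and $\mcC_2$, concludes by maximality of SCCs that a Player~1 state lies in $\mcC_2$, and derives a contradiction with Lemma~\ref{lem:one_cmpt_one_player}. Your first route, which extracts a simple cycle through a fixed mixed edge $(u,v)$ and contradicts Definition~\ref{tbdfg} directly, is also valid but is not needed.
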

\begin{proof}Suppose for contradiction that there exist states $s_1,s_1'\in \mcC_1$ and $s_2,s_2'\in \mcC_2$ such that there is a path from $s_1$ to $s_2$ and a path from $s_2'$ to $s_1'$. Since $\mcC_1$ is strongly connected, there is a path from $s_1'$ to $s_1$. Similarly, since $\mcC_2$ is strongly connected, there is a path from $s_2$ to $s_2'$. Concatenating these paths gives a path from $s_1$ to $s_2$ and a path from $s_2$ back to $s_1$. Therefore, $s_1\in\mcC_2$ by Definition \ref{def:scc} and this contradicts Lemma \ref{lem:one_cmpt_one_player}. Hence the conclusion holds.
\end{proof}

\begin{lemma}\label{acyclic}
Let $\mcG^{\mathrm{SCC}}$ be the condensation graph of a TBDFG. Then $\mcG^{\mathrm{SCC}}$ is a directed acyclic graph.
\end{lemma}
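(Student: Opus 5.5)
Acyclicity of the condensation graph is a general fact about directed graphs, so the plan is to prove it directly from the maximality of strongly connected components. We argue by contradiction. Suppose $\mcG^{\mathrm{SCC}}$ contains a directed cycle $\mcC_{i_1}\to\mcC_{i_2}\to\cdots\to\mcC_{i_t}\to\mcC_{i_1}$ with $t\geq 2$. Self-loops cannot occur, because Definition~\ref{def:cg} only places edges between distinct components. By that definition, each edge $(\mcC_{i_j},\mcC_{i_{j+1}})$ is witnessed by an action $a_j=(s_j,s'_{j+1})\in\mcA$ with $s_j\in\mcC_{i_j}$ and $s'_{j+1}\in\mcC_{i_{j+1}}$, where indices are taken cyclically.

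The key step is to lift this cycle to a closed walk in $\mcG$. Inside each component $\mcC_{i_j}$, strong connectivity (Definition~\ref{def:scc}) gives a directed path from the entry state $s'_j$ to the exit state $s_j$. Concatenating these internal paths with the witnessing actions $a_j$ produces a closed directed walk through $s_1,\dots,s_t$. Hence for any two of these states, say $s_1$ and $s_2$, there is a directed path from $s_1$ to $s_2$ and one from $s_2$ back to $s_1$. So $s_1$ and $s_2$ lie in a common strongly connected set. By maximality of SCCs they belong to the same component, which contradicts $\mcC_{i_1}\neq\mcC_{i_2}$. Therefore $\mcG^{\mathrm{SCC}}$ has no directed cycle.

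For the TBDFG-specific meaning the authors want, I would add a remark rather than rely on it. Combining this with Lemma~\ref{lem:one_cmpt_one_player}, every node of the resulting DAG is a single-player component. Lemma~\ref{lem:one_side_path} gives the special case of two components of opposite players. The general argument, however, needs neither of these lemmas.

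The only step needing care is that the concatenated object is a walk rather than a simple path. This causes no difficulty, since reachability in each direction is all that the SCC definition requires. If one insists on simple paths, repeated vertices can be deleted, as in the proof of Lemma~\ref{lem:one_cmpt_one_player}. I expect no real obstacle beyond this bookkeeping.
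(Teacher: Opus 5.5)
Your proof is correct, but it takes a different and more general route than the paper.

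\textbf{The paper's route.} The paper derives Lemma~\ref{acyclic} in one line from Lemma~\ref{lem:one_side_path}. A cycle in $\mcG^{\mathrm{SCC}}$ would give paths in both directions between two components, which contradicts that lemma. That lemma in turn rests on Lemma~\ref{lem:one_cmpt_one_player}, and hence on the forward-progression property.

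\textbf{Your route.} You use the standard maximality argument for strongly connected components. You lift the condensation cycle to a closed walk in $\mcG$ and conclude that two distinct components would have to coincide. This uses nothing about TBDFGs, and it shows that the condensation of any directed graph is acyclic.

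\textbf{What your route buys.} It is complete. Lemma~\ref{lem:one_side_path} is stated only for a pair consisting of a Player~1 component and a Player~2 component. So the paper's reduction does not literally cover a condensation cycle whose components all belong to the same player. Your argument handles every cycle uniformly. In fact, the paper's own proof of Lemma~\ref{lem:one_side_path} reaches its contradiction through exactly the maximality step you use, namely concluding $s_1\in\mcC_2$.

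\textbf{What the paper's route buys.} It mainly gives expository emphasis on the forward structure. As you correctly note, the TBDFG hypothesis is genuinely needed only for Lemma~\ref{lem:one_cmpt_one_player}, so that each node of the DAG is a single-player component. Acyclicity itself does not need it.
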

\begin{proof}
This is implied directly from Lemma \ref{lem:one_side_path}. If there is a cycle in the condensation graph, then there must be two paths connecting two strongly connected components from both directions, conflicting the results in Lemma \ref{lem:one_side_path}.
\end{proof}

Lemma~\ref{acyclic} together with Lemma~\ref{lem:one_cmpt_one_player}, identifies the structural property that makes TBDFGs algorithmically simpler than general TBSGs. The game can be decomposed into strongly connected components that are ordered acyclically, and each such component consists of states controlled by a single player. 
\begin{figure}[H]
\centering

\begin{tikzpicture}[
scale=0.78,
state1/.style={circle, draw=blue!60, fill=blue!20, thick, minimum size=4mm, inner sep=0pt, font=\tiny},
state2/.style={circle, draw=red!60, fill=red!20, thick, minimum size=4mm, inner sep=0pt, font=\tiny},
legendstate1/.style={circle, draw=blue!60, fill=blue!20, thick, minimum size=3.5mm, inner sep=0pt},
legendstate2/.style={circle, draw=red!60, fill=red!20, thick, minimum size=3.5mm, inner sep=0pt},
scc1/.style={draw=blue!35, fill=blue!3, line width=0.45pt, rounded corners=6pt,
              minimum width=1.45cm, minimum height=1.20cm, inner sep=0pt},
scc2/.style={draw=red!35, fill=red!3, line width=0.45pt, rounded corners=6pt,
              minimum width=1.45cm, minimum height=1.20cm, inner sep=0pt},
blackedge/.style={->, line width=0.45pt},
caption/.style={font=\scriptsize},
>=stealth,
font=\small
]


\begin{scope}[on background layer]
    \node[scc1] (Abox) at (0.00,0.25) {};
    \node[scc2] (Bbox) at (2.65,0.25) {};
    \node[scc1] (Cbox) at (5.30,0.25) {};
    \node[scc2] (Dbox) at (7.95,0.25) {};

    \node[scc2] (Ebox) at (0.00,-2.55) {};
    \node[scc1] (Fbox) at (2.65,-2.55) {};
    \node[scc2] (Gbox) at (5.30,-2.55) {};
    \node[scc1] (Hbox) at (7.95,-2.55) {};
\end{scope}


\node[legendstate1] at (12.00,0.65) {};
\node[anchor=west, font=\scriptsize] at (12.30,0.65) {Player 1 state $(\mcS^1)$};

\node[legendstate2] at (12.00,0.15) {};
\node[anchor=west, font=\scriptsize] at (12.30,0.15) {Player 2 state $(\mcS^2)$};

\draw[blackedge] (11.80,-0.35) -- (12.20,-0.35);
\node[anchor=west, font=\scriptsize] at (12.30,-0.35) {Action};

\node[scc1, minimum width=0.55cm, minimum height=0.32cm] at (12.00,-0.90) {};
\node[anchor=west, font=\scriptsize] at (12.30,-0.90) {Player 1 SCC};

\node[scc2, minimum width=0.55cm, minimum height=0.32cm] at (12.00,-1.40) {};
\node[anchor=west, font=\scriptsize] at (12.30,-1.40) {Player 2 SCC};


\node[state1] (a1) at (-0.35,0.35) {1};
\node[state1] (a2) at (0.25,0.65) {2};
\node[state1] (a3) at (0.50,-0.05) {3};

\draw[dashed,blue,->,thick] (a1) -- (a2);
\draw[dashed,blue,->,thick] (a2) -- (a3);
\draw[dashed,blue,->,thick] (a3) -- (a1);

\node[state2] (b1) at (2.35,0.35) {4};
\node[state2] (b2) at (2.95,-0.20) {5};

\draw[dashed,red,->,thick,bend left=25] (b1) to (b2);
\draw[dashed,red,->,thick,bend left=25] (b2) to (b1);

\node[state1] (c1) at (5.30,0.15) {6};

\draw[dashed,blue,->,thick]
    (c1) to[out=60,in=120,looseness=8] (c1);

\node[state2] (d1) at (7.60,0.55) {7};
\node[state2] (d2) at (8.30,0.55) {8};
\node[state2] (d3) at (7.95,-0.15) {9};

\draw[dashed,red,->,thick] (d1) -- (d2);
\draw[dashed,red,->,thick] (d2) -- (d3);
\draw[dashed,red,->,thick] (d3) -- (d1);


\node[state2] (e1) at (-0.35,-2.20) {10};
\node[state2] (e2) at (0.35,-2.20) {11};
\node[state2] (e3) at (0.35,-2.90) {12};
\node[state2] (e4) at (-0.35,-2.90) {13};

\draw[dashed,red,->,thick] (e1) -- (e2);
\draw[dashed,red,->,thick] (e2) -- (e3);
\draw[dashed,red,->,thick] (e3) -- (e4);
\draw[dashed,red,->,thick] (e4) -- (e1);

\node[state1] (f1) at (2.35,-2.25) {14};
\node[state1] (f2) at (2.95,-2.80) {15};

\draw[dashed,blue,->,thick,bend left=25] (f1) to (f2);
\draw[dashed,blue,->,thick,bend left=25] (f2) to (f1);

\node[state2] (g1) at (5.00,-2.25) {16};
\node[state2] (g2) at (5.60,-2.80) {17};

\draw[dashed,red,->,thick,bend left=25] (g1) to (g2);
\draw[dashed,red,->,thick,bend left=25] (g2) to (g1);

\node[state1] (h1) at (7.95,-2.55) {18};

\draw[dashed,blue,->,thick]
    (h1) to[out=300,in=20,looseness=8] (h1);


\draw[blackedge] (a3) -- (b1);
\draw[blackedge] (a2) -- (b1);
\draw[blackedge] (b1) -- (c1);
\draw[blackedge] (c1) -- (d1);

\draw[blackedge] (e2) -- (f1);
\draw[blackedge] (f2) -- (g1);
\draw[blackedge] (g2) -- (h1);

\draw[blackedge] (a1) -- (e1);
\draw[blackedge] (b2) -- (f1);
\draw[blackedge] (c1) -- (g1);
\draw[blackedge] (d3) -- (h1);

\draw[blackedge, bend left=12] (a3) to (f1);
\draw[blackedge, bend left=12] (b2) to (g1);
\draw[blackedge, bend left=12] (c1) to (h1);
\draw[blackedge, bend right=15] (d3) to (g2);


\node[caption] at (0.00,1.32) {$\mcC_1$};
\node[caption] at (2.65,1.32) {$\mcC_2$};
\node[caption] at (5.30,1.32) {$\mcC_3$};
\node[caption] at (7.95,1.32) {$\mcC_4$};

\node[caption] at (0.00,-3.58) {$\mcC_5$};
\node[caption] at (2.65,-3.58) {$\mcC_6$};
\node[caption] at (5.30,-3.58) {$\mcC_7$};
\node[caption] at (7.95,-3.58) {$\mcC_8$};

\path[use as bounding box] (-1.25,-4.05) rectangle (15.20,1.75);

\end{tikzpicture}

\caption{Strongly connected components in a two-player game graph. }
\label{fig:scc_twoplayer}

\end{figure}

Figure~\ref{fig:scc_twoplayer} illustrates this decomposition. The shaded regions represent strongly connected components; within each region, recurrence is confined to one player's states, while the black edges represent actions between different components. After contracting each shaded component to a single node, these inter-component edges form the condensation graph, which has no directed cycles. Therefore, once the values of all successor components have been computed, the remaining subproblem inside a component is a one-player deterministic MDP with fixed boundary values on actions leaving the component. This observation motivates the following backward propagation algorithm.

\begin{algorithm}[H]
\caption{Backward SCC propagation algorithm for TBDFGs}
\label{alg:scc_backward}
\begin{algorithmic}[1]
\REQUIRE A TBDFG with directed graph $\mcG=(\mcS,\mcA,\bfc,\bfP,\gamma)$. Set $k=1$.

\STATE Compute the strongly connected components $\mcC_1,\ldots,\mcC_K$ of $\mcG$ and construct the condensation graph $\mcG^{\rm SCC}$. Compute a reverse topological ordering of $\mcG^{\rm SCC}$, relabeled so that every edge $(\mcC_i, \mcC_j)$ satisfies $i>j$.
\STATE Consider the component $\mcC_k$, treat the values of all successor components of $\mcC_k$ as fixed boundary values, and solve the resulting one-player deterministic MDP restricted to $\mcC_k$ using simplex methods with simple pivoting rule. Store the values $v_s$ and the selected optimal actions $\pi_s$ for all $s\in\mcC_k$.
\STATE Terminate the algorithm if $k=K$. Otherwise, set $k\leftarrow k+1$ and go to Step 2.
\end{algorithmic}
\end{algorithm}

\begin{rem}
\begin{enumerate}[label=(\roman*)]

\item The strongly connected components of a directed graph can be computed by standard graph-search algorithms, such as Tarjan's algorithm~\cite{tarjan1972depth}. This algorithm also allows us to construct the condensation graph efficiently. It compute all SCCs in time $O(n+m)$; see \cite{tarjan1972depth}. After the SCCs are identified, the condensation graph can be constructed by scanning each action $a\in\mcA$ once and adding an edge between the corresponding components. This additional step also takes $O(n+m)$ time. Hence the total running time of Step 1 is $O(n+m)$.
\item The correctness of Algorithm~\ref{alg:scc_backward} follows from the acyclic structure of the condensation graph; see Lemma \ref{acyclic}. When a component is processed in reverse topological order, every action leaving the component enters a successor component whose value has already been computed. Hence these outgoing actions can be treated as boundary actions with fixed continuation values. Since each strongly connected component contains states controlled by only one player, the remaining local problem is a deterministic MDP. Solving these MDP subproblems backward over the condensation graph therefore yields the value vector and an optimal stationary strategy for the entire TBDFG.
\end{enumerate}
\end{rem}

We next prove that Algorithm \ref{alg:scc_backward} computes optimal strategy profile for the TBDFG in strongly polynomial time as stated in the following theorem.
\begin{thm}
    \label{thm:scc_backward_correct}
    Algorithm~\ref{alg:scc_backward} correctly computes the value vector and an optimal strategy profile for a TBDFG. Moreover, it requires at most $O(n^3m^2\log^2 n)$ pivot steps of simplex method.
\end{thm}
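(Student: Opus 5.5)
The proof has two parts: correctness by induction over the reverse topological order, and a complexity bound obtained by summing the per-component deterministic-MDP bounds.

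\textbf{Correctness.} By Lemma~\ref{acyclic}, the condensation graph is acyclic, so the relabeling in Step~1 exists. By Lemma~\ref{lem:one_cmpt_one_player}, each $\mcC_k$ is controlled by a single player. I will prove by induction on $k$ that after processing $\mcC_1,\dots,\mcC_k$ the stored values satisfy $v_s=v^*_s$ for all $s\in\mcC_1\cup\dots\cup\mcC_k$, where $\bfv^*$ is the fixed point of the Shapley operator.

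For the inductive step, take $s\in\mcC_k$ and an action $a\in\mcA_s$. Either $a$ stays inside $\mcC_k$, or it leads to a successor component $\mcC_j$ with $j<k$, whose values already equal $\bfv^*$. The restriction of the Bellman equations to $\mcC_k$, with outgoing actions replaced by boundary constants $c_a+\gamma v^*_{s'}$, is therefore the optimality system of a one-player deterministic discounted MDP. For Player~1 this is a min-system and for Player~2 a max-system (equivalently, minimize $-\bfc$). Its solution is unique, since the operator is a $\gamma$-contraction. The restriction of $\bfv^*$ solves it, so the MDP value equals $\bfv^*$ on $\mcC_k$, and the stored argmin/argmax actions are exactly optimal actions $\pi^*_s$.

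After all $K$ components are processed, the collected $\bfv$ satisfies the full fixed-point equations. The collected actions attain the min or max at every state. By the optimality characterization \eqref{opt}, they form an optimal strategy profile: reduced costs are zero on chosen actions, $\geq 0$ on $\mcA^1$ and $\leq 0$ on $\mcA^2$.

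\textbf{Complexity.} Step~1 costs $O(n+m)$ and involves no pivots. Let $n_k=|\mcC_k|$ and let $m_k$ be the number of actions at states of $\mcC_k$. I will model the local MDP as a deterministic MDP on $n_k+1$ states: each boundary action is redirected to a single auxiliary absorbing state with a self-loop whose cost encodes the continuation value. A cleaner option is to keep one absorbing terminal state per boundary target, but a single state suffices if each boundary action $a$ is given cost $c_a+\gamma v_{s'}$ and a transition to a zero-cost absorbing state. This is a deterministic MDP with $n_k+1$ states and $m_k+1$ actions.

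By \cite[Theorem~3.7]{post2015simplex} (as in Proposition~\ref{thm-MDP}), the simplex method with the simple pivoting rule solves it in $O((n_k+1)^3(m_k+1)^2\log^2 (n_k+1))$ pivots. Since $\sum_k n_k=n$ and $\sum_k m_k=m$, superadditivity of $x^3y^2$ gives
\[
\sum_k n_k^3m_k^2\log^2 n \leq \Big(\sum_k n_k\Big)^3\Big(\sum_k m_k\Big)^2\log^2 n = n^3m^2\log^2 n.
\]
The $+1$ terms contribute at most a constant factor, because $n_k+1\leq 2n_k$ and $m_k+1\leq 2m_k$. This yields the claimed $O(n^3m^2\log^2 n)$ bound.

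\textbf{Main obstacle.} The delicate step is the reduction to a genuine deterministic MDP in the form required by \cite{post2015simplex}. The boundary actions leave $\mcC_k$, so I must check two things: that the absorbing-state construction preserves the discounted values, which requires choosing the costs so that the scaling by $\gamma$ is correct; and that the initial basis is feasible for the simplex method. Both should follow from Proposition~\ref{prop-basic}(iii), which gives nondegeneracy and the bijection between vertices and strategies, applied to the augmented MDP.
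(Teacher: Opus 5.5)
Your proposal is correct and follows essentially the same route as the paper. It proceeds by induction over the reverse topological order, using that each SCC is single-player so that a component with fixed boundary values is a deterministic MDP, and then sums the per-component deterministic-MDP pivot bounds via $\sum_k n_k^3 m_k^2 \le n^3 m^2$. Your absorbing-state encoding of the boundary values (cost $c_a+\gamma v_{s'}$ plus a zero-cost absorbing sink), the contraction-based uniqueness argument, and the use of $\log(n_k+1)$ in place of $\log n_k$ make explicit details that the paper's proof leaves implicit.
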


\begin{proof}
Note that $\mcC_1,\ldots,\mcC_K$ is reversely ordered topologically so that every edge in the condensation graph points from a larger index to a smaller index. We prove the result by induction. For $\mcC_1$, it follows from Lemma~\ref{lem:one_cmpt_one_player} that the component contains states controlled by only one player. Therefore, the restriction of the game to it is a deterministic MDP, and solving it gives the correct values and optimal actions on the component.

Now suppose for induction that all components $\mcC_j$ with $j<k$ have already been solved correctly. Consider $\mcC_k$. Every action leaving $\mcC_k$ enters some successor component $\mcC_j$ with $j<k$, whose values are already known by the induction hypothesis. Thus the continuation values of all outgoing actions from $\mcC_k$ are fixed. After incorporating these fixed boundary values, the remaining problem on $\mcC_k$ is again a deterministic MDP, because all states in $\mcC_k$ are controlled by a single player. Solving this local MDP gives the correct values and optimal actions on $\mcC_k$.

By induction, the algorithm computes the correct values on all strongly connected components. The local optimal actions selected across all components form an optimal strategy profile for the original TBDFG.

It remains to bound the number of simplex pivot steps. Let $n_k$ and $m_k$ denote the number of states and actions associated with component $\mcC_k$, respectively. By Proposition~\ref{thm-MDP}, solving the deterministic MDP on $\mcC_k$ requires at most $O(n_k^3 m_k^2 \log^2 n_k)$ pivot steps of the simplex method. Since $\sum_{k=1}^K n_k \le n$, $\sum_{k=1}^K m_k \le m$, and $\log n_k \le \log n$, the total number of pivot steps is bounded by $ O(n^3m^2\log^2 n)$. The additional graph operations, including computing the strongly connected components and a topological ordering of the condensation graph, can be performed in linear time in the size of the graph and do not affect the stated pivot-step bound. This completes the proof.
\end{proof}

\section{Summary of contributions}\label{sec:summary}

This paper studies TBDFGs, a class of deterministic turn-based games in which every directed cycle is controlled by a single player. We show that this forward structure yields strong algorithmic consequences. First, the general strategy-iteration method in Algorithm~\ref{alg} follows a discount-independent strongly polynomial complexity on every TBDFG, even though the algorithm neither recognizes nor explicitly exploits the forward property. This is a structure-oblivious guarantee for a standard strategy-iteration method. Second, when the TBDFG structure is known in advance, the game can be decomposed into single-player strongly connected components and solved by a specialized graph-based algorithm with an improved pivot bound. Together, these results show that forward structure is favorable for algorithmic convergence in two complementary ways: it guarantees efficient convergence of a general-purpose strategy-iteration method and, when explicitly identified, supports a faster structure-aware algorithm.


\bibliographystyle{abbrv}
\bibliography{ref}

@article{shapley1953stochastic,
  title={Stochastic games},
  author={Shapley, Lloyd S},
  journal={Proceedings of the national academy of sciences},
  volume={39},
  number={10},
  pages={1095--1100},
  year={1953},
  publisher={National Academy of Sciences}
}

@article{hansen2013strategy,
  title={Strategy iteration is strongly polynomial for 2-player turn-based stochastic games with a constant discount factor},
  author={Hansen, Thomas Dueholm and Miltersen, Peter Bro and Zwick, Uri},
  journal={Journal of the ACM (JACM)},
  volume={60},
  number={1},
  pages={1--16},
  year={2013},
  publisher={ACM New York, NY, USA}
}

@article{post2015simplex,
  title={The simplex method is strongly polynomial for deterministic Markov decision processes},
  author={Post, Ian and Ye, Yinyu},
  journal={Mathematics of Operations Research},
  volume={40},
  number={4},
  pages={859--868},
  year={2015},
  publisher={INFORMS}
}

@article{ye2011simplex,
  author  = {Ye, Yinyu},
  title   = {The Simplex and Policy-Iteration Methods Are Strongly Polynomial for the Markov Decision Problem with a Fixed Discount Rate},
  journal = {Mathematics of Operations Research},
  volume  = {36},
  number  = {4},
  pages   = {593--603},
  year    = {2011},
  doi     = {10.1287/moor.1110.0516}
}

@book{puterman1994markov,
  title={Markov decision processes: discrete stochastic dynamic programming},
  author={Puterman, Martin L},
  year={2014},
  publisher={John Wiley \& Sons}
}

@book{howard1960dynamic,
  title={Dynamic programming and markov processes.},
  author={Howard, Ronald A},
  year={1960},
  publisher={John Wiley}
}

@book{filar1997competitive,
  title={Competitive Markov decision processes},
  author={Filar, Jerzy and Vrieze, Koos},
  year={2012},
  publisher={Springer Science \& Business Media}
}

@article{gillette1957stochastic,
  title={Stochastic games with zero stop probabilities},
  author={Gillette, Dean},
  journal={Contributions to the Theory of Games III},
  volume={39},
  pages={179--187},
  year={1957}
}

@article{condon1992complexity,
  title={The complexity of stochastic games},
  author={Condon, Anne},
  journal={Information and Computation},
  volume={96},
  number={2},
  pages={203--224},
  year={1992},
  publisher={Elsevier}
}

@article{ludwig1995subexponential,
  title={A subexponential randomized algorithm for the simple stochastic game problem},
  author={Ludwig, Walter},
  journal={Information and computation},
  volume={117},
  number={1},
  pages={151--155},
  year={1995},
  publisher={Elsevier}
}

@article{ehrenfeucht1979positional,
  title={Positional strategies for mean payoff games},
  author={Ehrenfeucht, Andrzej and Mycielski, Jan},
  journal={International Journal of Game Theory},
  volume={8},
  number={2},
  pages={109--113},
  year={1979},
  publisher={Springer}
}

@article{zwick1996complexity,
  title={The complexity of mean payoff games on graphs},
  author={Zwick, Uri and Paterson, Mike},
  journal={Theoretical Computer Science},
  volume={158},
  number={1-2},
  pages={343--359},
  year={1996},
  publisher={Elsevier}
}

@inproceedings{emerson1991tree,
  author    = {E. Allen Emerson and Charanjit S. Jutla},
  title     = {Tree Automata, {$\mu$}-Calculus and Determinacy},
  booktitle = {Proceedings of the 32nd Annual Symposium on Foundations of Computer Science},
  pages     = {368--377},
  publisher = {IEEE Computer Society},
  year      = {1991},
  doi       = {10.1109/SFCS.1991.185392}
}

@inproceedings{calude2017deciding,
  title={Deciding parity games in quasipolynomial time},
  author={Calude, Cristian S and Jain, Sanjay and Khoussainov, Bakhadyr and Li, Wei and Stephan, Frank},
  booktitle={Proceedings of the 49th Annual ACM SIGACT Symposium on Theory of Computing},
  pages={252--263},
  year={2017}
}

@article{papadimitriou1987complexity,
  title={The complexity of Markov decision processes},
  author={Papadimitriou, Christos H and Tsitsiklis, John N},
  journal={Mathematics of operations research},
  volume={12},
  number={3},
  pages={441--450},
  year={1987},
  publisher={INFORMS}
}

@inproceedings{littman1995complexity,
  author    = {Michael L. Littman and Thomas L. Dean and Leslie Pack Kaelbling},
  title     = {On the Complexity of Solving Markov Decision Problems},
  booktitle = {Proceedings of the Eleventh Conference on Uncertainty in Artificial Intelligence},
  pages     = {394--402},
  publisher = {Morgan Kaufmann},
  year      = {1995},
  doi       = {10.5555/2074158.2074203}
}

@inproceedings{mansour1999complexity,
  author    = {Yishay Mansour and Satinder Singh},
  title     = {On the Complexity of Policy Iteration},
  booktitle = {Proceedings of the Fifteenth Conference on Uncertainty in Artificial Intelligence},
  pages     = {401--408},
  publisher = {Morgan Kaufmann},
  year      = {1999},
  doi       = {10.5555/2073796.2073842}
}

@article{melekopoglou1994complexity,
  title={On the complexity of the policy improvement algorithm for Markov decision processes},
  author={Melekopoglou, Mary and Condon, Anne},
  journal={ORSA Journal on Computing},
  volume={6},
  number={2},
  pages={188--192},
  year={1994},
  publisher={INFORMS}
}

@inproceedings{fearnley2010exponential,
  title={Exponential lower bounds for policy iteration},
  author={Fearnley, John},
  booktitle={International Colloquium on Automata, Languages, and Programming},
  pages={551--562},
  year={2010},
  organization={Springer}
}

@inproceedings{hollanders2012complexity,
  title={The complexity of policy iteration is exponential for discounted Markov decision processes},
  author={Hollanders, Romain and Delvenne, Jean-Charles and Jungers, Rapha{\"e}l M},
  booktitle={2012 IEEE 51st IEEE Conference on Decision and Control (CDC)},
  pages={5997--6002},
  year={2012},
  organization={IEEE}
}

@inproceedings{andersson2009complexity,
  title={The complexity of solving stochastic games on graphs},
  author={Andersson, Daniel and Miltersen, Peter Bro},
  booktitle={International Symposium on Algorithms and Computation},
  pages={112--121},
  year={2009},
  organization={Springer}
}

@inproceedings{hansen2011exact,
  title={Exact algorithms for solving stochastic games},
  author={Hansen, Kristoffer Arnsfelt and Koucky, Michal and Lauritzen, Niels and Miltersen, Peter Bro and Tsigaridas, Elias P},
  booktitle={Proceedings of the forty-third annual ACM symposium on Theory of computing},
  pages={205--214},
  year={2011}
}

@article{friedmann2011exponential,
  title={An exponential lower bound for the latest deterministic strategy iteration algorithms},
  author={Friedmann, Oliver},
  journal={Logical Methods in Computer Science},
  volume={7},
  year={2011},
  publisher={Episciences. org}
}

@article{silver2016mastering,
  title={Mastering the game of Go with deep neural networks and tree search},
  author={Silver, David and Huang, Aja and Maddison, Chris J and Guez, Arthur and Sifre, Laurent and Van Den Driessche, George and Schrittwieser, Julian and Antonoglou, Ioannis and Panneershelvam, Veda and Lanctot, Marc and others},
  journal={nature},
  volume={529},
  number={7587},
  pages={484--489},
  year={2016},
  publisher={Nature Publishing Group UK London}
}

@article{silver2017mastering,
  title={Mastering the game of go without human knowledge},
  author={Silver, David and Schrittwieser, Julian and Simonyan, Karen and Antonoglou, Ioannis and Huang, Aja and Guez, Arthur and Hubert, Thomas and Baker, Lucas and Lai, Matthew and Bolton, Adrian and others},
  journal={nature},
  volume={550},
  number={7676},
  pages={354--359},
  year={2017},
  publisher={Nature Publishing Group UK London}
}

@article{jia2020towards,
  title={Towards solving 2-{TBSG} efficiently},
  author={Jia, Zeyu and Wen, Zaiwen and Ye, Yinyu},
  journal={Optimization Methods and Software},
  volume={35},
  number={4},
  pages={706--721},
  year={2020},
  publisher={Taylor \& Francis}
}

@article{tarjan1972depth,
  title={Depth-first search and linear graph algorithms},
  author={Tarjan, Robert},
  journal={SIAM journal on computing},
  volume={1},
  number={2},
  pages={146--160},
  year={1972},
  publisher={SIAM}
}

@article{scherrer2013improved,
  title={Improved and generalized upper bounds on the complexity of policy iteration},
  author={Scherrer, Bruno},
  journal={Advances in Neural Information Processing Systems},
  volume={26},
  year={2013}
}

@article{ye2005new,
  title={A new complexity result on solving the Markov decision problem},
  author={Ye, Yinyu},
  journal={Mathematics of Operations Research},
  volume={30},
  number={3},
  pages={733--749},
  year={2005},
  publisher={INFORMS}
}

\end{document}